\numberwithin{equation}{section}
\newif\if@restonecol
\newtheorem{theorem}{Theorem}[section]
  \newtheorem{lemma}[theorem]{Lemma}
  \newtheorem*{problem}{Problem}
   \newtheorem{corollary}[theorem]{Corollary}
  \newtheorem{proposition}[theorem]{Proposition}
  \theoremstyle{definition}
  \newtheorem{definition}[theorem]{Definition}
  \newtheorem{example}[theorem]{Example}
   \newtheorem{remark}[theorem]{Remark}
\newcommand \NN{\mathbb{N}} 
\newcommand \ZZ {\mathbb{Z}}
\newcommand \CC {\mathbb{C}}
\newcommand \pr {\mathbb{P}} 
\newcommand \A{\mathfrak{a}}
\newcommand \Ann{\textrm{Ann}}
\newcommand \RR {\mathbb{R}}
\newcommand \Sec {\textrm{Sec}}
\newcommand \rk {\textrm{rk}}
\newcommand \Sn {\mathbb{S}}
\newcommand \TropC {\mathcal{T}}
\newcommand \init {\textrm{in}}
\newcommand \NP {\textrm{NP}}
\newcommand \Ass {\textrm{Ass}}
\newcommand \cT {\mathcal{T}}
\newcommand \cM {\mathcal{M}}
\newcommand \cP {\mathcal{P}}
\newcommand \cS {\mathcal{S}}
\newcommand \cQ {\mathcal{Q}}
\newcommand \cF {\mathcal{F}}
\newcommand \x {\underline{x}}
\newcommand \y {\underline{y}}
\newcommand \sat{\textrm{sat}}
\newcommand \gfan {\texttt{gfan}}
\newcommand \polymake {\texttt{Polymake}}
\begin{document}

 \title[An Implicitization Challenge for Binary Factor Analysis]{An Implicitization Challenge\\ for Binary Factor Analysis}
  \author{Mar\'ia Ang\'elica Cueto}
  \address{Department of Mathematics, University of California,
    Berkeley, CA 94720, USA.} 
\email{macueto@math.berkeley.edu} 
\author{Enrique A.~Tobis}
\address{Departamento de Matem\'atica, FCEN - Universidad de Buenos Aires, 
Pabell\'on I - Ciudad Universitaria, C1428EGA, Buenos Aires, Argentina.}
\email{etobis@dc.uba.ar}
\author{Josephine Yu}
\address{School of Mathematics, Georgia Institute of Technology, Atlanta GA 30332, USA.}
\email{josephine.yu@math.gatech.edu}
\thanks{M.A.\ Cueto was supported by a UC Berkeley Chancellor's
  Fellowship. E.A.\ Tobis was supported by a CONICET Doctoral
  Fellowship, CONICET PIP 5617, ANPCyT PICT 20569 and UBACyT X042 and
  X064 grants. J.\ Yu was supported by an NSF Postdoctoral Fellowship.}

\keywords{ 
    Factor analysis, tropical geometry, Hadamard products, Newton
    polytope
}

\subjclass[2010]{14T05,{(14M25,14Q10)}}

\begin{abstract}
  We use tropical geometry to compute the multidegree and Newton polytope of the
  hypersurface of a statistical model with two hidden and four
  observed binary random variables, solving an open question stated by
  Drton, Sturmfels and Sullivant in \cite[Problem 7.7]{LAS}.  The model
  is obtained from the undirected graphical model of the complete
  bipartite graph $K_{2,4}$ by marginalizing two of the six binary
  random variables. We present algorithms for computing the Newton
  polytope of its defining equation by parallel walks along the
  polytope and its normal fan. In this way we compute vertices of
  the polytope. Finally, we also compute and certify its
  facets by studying tangent cones of the polytope at the symmetry classes vertices.  The Newton polytope has \emph{17\,214\,912} vertices in \emph{44\,938} symmetry classes and \emph{70\,646} facets in \emph{246} symmetry classes.
\end{abstract}

\maketitle

\section{Introduction}\label{sec:intro}

In recent years, a fruitful interaction between (computational)
algebraic geometry and statistics has emerged, under the form of
algebraic statistics. The main objects studied by this field are
probability distributions that can be described by means of polynomial
or even rational maps. Among them, an important source of examples are
the so called graphical models. In this paper, we focus our attention
on a special model: the \emph{undirected $(4,2)$-binary factor analysis model}
$\mathcal{F}_{4,2}$.

First, let us describe our main player. Consider the complete
undirected bipartite graph $K_{2,4}$ with four {\em observed} nodes
$X_1, X_2, X_3, X_4$ and two {\em hidden} nodes $H_1, H_2$
(cf.~Figure \ref{fig:K24}).  Each node represents a binary random
variable and each edge represents a dependency between two random
variables.  In other words, if there is no edge between two random
variables, then they are conditionally independent given the rest of
the variables.  We obtain a hidden model from this undirected
graphical model by marginalizing over $H_1$ and $H_2$. This model is
the discrete undirected version of the factor analysis model
discussed in  \cite[Section~4.2]{LAS}.  The model
and its immediate generalization $\mathcal{F}_{m,n}$ is closely
related to the statistical model describing the behavior of restricted
Boltzmann machines~\cite{RBM}, which are widely discussed in the
Machine Learning literature. Here, $\mathcal{F}_{m,n}$ is the binary
undirected graphical model with $n$ hidden variables and $m$ observed
variables encoded in the complete bipartite graph $K_{m,n}$. The main
invariant of interest in these models is the expected dimension, and,
furthermore, lower bounds on $n$ such that the probability
distributions are a dense subset of the probability simplex
$\Delta_{2^m-1}$. By direct computation, it is easy to show that
$\mathcal{F}_{2,2}$ and $\mathcal{F}_{3,2}$ are dense subsets of the
corresponding probability simplices, so $\mathcal{F}_{4,2}$ is the
first interesting example worth studying.  Understanding the model
$\mathcal{F}_{4,2}$ can pave the way for the study of restricted
Boltzmann machines in general~\cite{TropRBM}.

\begin{figure}[ht]
\centering
\includegraphics[scale=0.5]{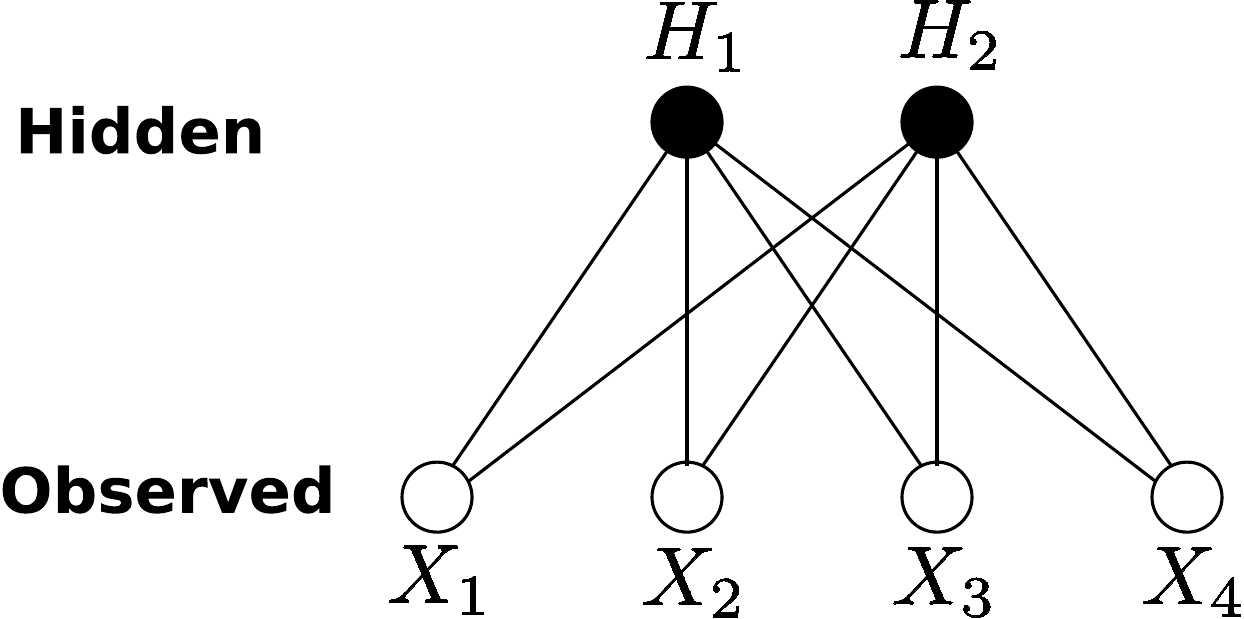}
\caption{The model $\mathcal{F}_{4,2}$.  Each node represents a binary random variable.}
\label{fig:K24}
\end{figure}

The set of all possible joint probability distributions $(X_1, X_2,
X_3, X_4)$ that arise in this way forms a semialgebraic variety $\cM$
in the probability simplex $\Delta_{15}$. To simplify our
construction, we disregard the inequalities defining the model and we
extend our parameterization to the entire affine space $\CC^{16}$.  In
other words, we consider the Zariski closure of the joint probability
distributions in $\CC^{16}$.  As a result of this, we obtain an
algebraic subvariety of $\CC^{16}$ which carries the core information
of our model. In turn, we projectivize the model by considering its
associated projective variety. This variety is expected to have
codimension one and be defined by a homogeneous polynomial in 16
variables.

\begin{problem}(An Implicitization Challenge, \cite[Ch.~VI, Problem
  7.7]{LAS}) Find the degree and the defining polynomial of the model
  $\cM$.
\end{problem}

Our main results  state that the variety $\cM$
is a hypersurface of degree 110 in $\pr^{15}$ (Theorem \ref{thm:main}) and explicitly enumerate all vertices and facets of the polytope (Theorem \ref{thm:polytope}). Our methods are based on tropical geometry.  Since the polynomial is
multihomogeneous, we get its \emph{multidegree} from just one
vertex. Interpolation techniques will allow us to compute the
corresponding irreducible homogeneous polynomial in 16 variables, using the lattice points in the Newton polytope.  However, this polytope will turn out to be too big for interpolation to be practically feasible.

The paper is organized as follows. In Section~\ref{sec:theModel} we
describe the parametric form of our model and we express our variety
as the Hadamard square of the first secant of the Segre embedding 
$\pr^1\times \pr^1\times \pr^1\times \pr^1 \hookrightarrow \pr^{15}$.  In
Section~\ref{sec:tropicalLand} we present the tropical interpretation
of our variety. By means of the nice interplay between the
construction described in Section~\ref{sec:theModel} and its
tropicalization, we compute this tropical variety as a collection of
cones with multiplicities. We should remark that we do not obtain a
fan structure, but, nonetheless, our characterization is sufficient to
fulfill the goal of the paper. The key ingredient is the computation
of multiplicities by the so called push-forward formula~\cite[Theorem
3.12]{NPImplicitEquation} which we generalize to match our setting
(Theorem~\ref{thm:STGeneralized}). We finish
Section~\ref{sec:tropicalLand} by describing the effective computation
of the tropical variety and discussing some of the underlying
combinatorics.

In Section~\ref{sec:ComputeNP} we compute the multidegree of our model
with respect to a natural 5-dimensional grading, which comes from the
tropical picture in Section~\ref{sec:tropicalLand}. Once this question
is answered, we shift gears and move to the study of the Newton
polytope of our variety.  We present two algorithms that compute
vertices of this polytope by ``shooting rays''
(Algorithm~\ref{alg:ray}) and ``walking'' from vertex to vertex in the
Newton polytope (Algorithm~\ref{alg:walk}). Using these methods, and also taking advantage of the $B_4$ symmetry of the polynomial and the Newton polytope, we
compute all \emph{17\,214\,912} vertices our polytope (in \emph{P44\,938} orbits under $B_4$), which shows the
intrinsic difficulties of this ``challenging'' problem. 
Along the way, we also compute the
tangent cones at each symmetry class of vertices and certify the facet normal
directions by looking at the local behavior of the tropical variety
around these vectors (after certifying they belong to the tropical
variety). In particular, by computing dimensions of a certain linear
space (Algorithm~\ref{alg:certifyFacets}) we can check if the vector
is a ray of the tropical variety. In this way, we certify all 246 facets
of the polytope modulo symmetry.
We believe these methods will pave
the way to attack combinatorial questions about high dimensional
polytopes with symmetry as the one analyzed in this paper.


\section{Geometry of the model}\label{sec:theModel}


We start this section by describing the parametric representation of
the model we wish to study.  Recall that all our six random variables
are binary, with four observed nodes and two hidden ones. Since the
model comes from an undirected graph (see \cite{LAS, ASCB}), we can
parameterize it by a map $p\colon \RR^{32} \to \RR^{16}$, where
\[
 p_{ijkl} = \sum_{s=0}^1 \sum_{r=0}^1
a_{si}b_{sj}c_{sk}d_{sl} e_{ri}f_{rj}g_{rk}h_{rl} \qquad \textrm{ for
  all }
 (i,j,k,l) \in \{0,1\}^4.
\]
Notice that our coordinates are homogeneous of degree 1 in the subset
of variables corresponding to each edge of the graph. Therefore, there
is a natural interpretation of this model in projective space. On the
other hand, by the distributive law we can write down each coordinate
as a product of two points in the model corresponding to the 4-claw
tree, which is the first secant variety of the Segre embedding $\pr^1
\times \pr^1 \times \pr^1 \times \pr^1 \hookrightarrow \pr^{15}$
(\cite{PhyAlgGeom}), i.e. 
\[
p\colon (\pr^1\times \pr^1)^8 \to \pr^{15} \quad p_{ijkl} = (\sum_{s=0}^1
a_{si}b_{sj}c_{sk}d_{sl})\, ( \sum_{r=0}^1
e_{ri}f_{rj}g_{rk}h_{rl}) \;\; \forall\,
(i,j,k,l) \in
\{0,1\}^4.
\]
From this observation it is natural to consider the Hadamard product
of projective varieties:
\begin{definition}\label{def:star}
  Let $X,Y\subset \pr^{n-1}$ be two projective varieties.  The
  \emph{Hadamard product} of $X$ and $Y$ is
\[
X \centerdot Y = \overline{\{(x_0y_0: \ldots:
    x_{n-1}y_{n-1}) \, |\, x\in X, y\in Y, x \centerdot y\neq 0\}}\subset \pr^{n-1},
\]
where $x \centerdot y = (x_0y_0, \ldots, x_{n-1}y_{n-1})\in \CC^{n}$.
\end{definition}
Note that this structure is well-defined since each coordinate is
bihomogeneous of degree (1,1).  The next proposition follows from the
construction.

\begin{proposition}\label{pr:HadamardProduct}
  The algebraic variety of the model is $\cM = X \centerdot X$ where
  $X$ is the first secant variety of the Segre embedding $\pr^1 \times
  \pr^1 \times \pr^1 \times \pr^1 \hookrightarrow \pr^{15}$.
\end{proposition}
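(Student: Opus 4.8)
The plan is to unwind the two parametric descriptions of the model given just above the proposition and match them term-by-term with the definition of the Hadamard product. The key point is purely formal: the distributive law already exhibited each coordinate $p_{ijkl}$ as a product $u_{ijkl}\cdot v_{ijkl}$, where $u_{ijkl}=\sum_{s=0}^1 a_{si}b_{sj}c_{sk}d_{sl}$ and $v_{ijkl}=\sum_{r=0}^1 e_{ri}f_{rj}g_{rk}h_{rl}$ run over the coordinates of two independent copies of the secant-of-Segre variety $X$. So the constructible set $\{p(a,\dots,h)\}$ is literally contained in $\{x\centerdot y : x,y\in X\}$, and conversely every such $x\centerdot y$ with $x,y$ in the image of the Segre-secant parameterization is of the form $p(a,\dots,h)$. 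Hence the images agree up to the locus where $x\centerdot y=0$.

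First I would recall precisely why $X$, the first secant variety of $\pr^1\times\pr^1\times\pr^1\times\pr^1\hookrightarrow\pr^{15}$, is parameterized by $q_{ijkl}=\sum_{s=0}^1 a_{si}b_{sj}c_{sk}d_{sl}$: a general point on a secant line of the Segre variety is a sum of two points of the Segre variety, i.e.\ a sum of two rank-one tensors $a_{0\bullet}\otimes b_{0\bullet}\otimes c_{0\bullet}\otimes d_{0\bullet}$ and $a_{1\bullet}\otimes b_{1\bullet}\otimes c_{1\bullet}\otimes d_{1\bullet}$, and taking Zariski closure of this image gives $X$ (this is the statement referenced to \cite{PhyAlgGeom}). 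Then the two-summand formula for $p_{ijkl}$ in the excerpt is exactly the coordinatewise product of two such parameterizations with independent parameters.

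Next I would take closures carefully. Let $\phi\colon(\pr^1\times\pr^1)^8\dashrightarrow\pr^{15}$ be the parameterization of $\cM$ and let $\psi\colon(\pr^1\times\pr^1)^4\dashrightarrow X$ be the Segre-secant parameterization. The factorization above says $\phi=\mu\circ(\psi\times\psi)$ on the locus where the relevant products are nonzero, where $\mu$ is coordinatewise multiplication. Therefore the image of $\phi$ is contained in $\mu\big(\overline{\mathrm{im}\,\psi}\times\overline{\mathrm{im}\,\psi}\big)\setminus\{0\}=\mu(X\times X)\setminus\{0\}$, whose closure is $X\centerdot X$ by Definition~\ref{def:star}. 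Conversely, since $\overline{\mathrm{im}\,\psi}=X$, the image of $\psi\times\psi$ is dense in $X\times X$, so $\mu\circ(\psi\times\psi)$ has image dense in $X\centerdot X$; but that image is contained in $\overline{\mathrm{im}\,\phi}=\cM$. Taking closures in both inclusions gives $\cM=X\centerdot X$.

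I do not expect a genuine obstacle here — the proposition is essentially a bookkeeping consequence of the distributive law plus the definition of $X$ and of the Hadamard product. The only point that needs a line of care is the interaction of Zariski closure with the coordinatewise product (that $\overline{f(A)\centerdot f(B)}$ and $f(\bar A)\centerdot f(\bar B)$ have the same closure once one excises the vanishing locus), which is handled by the standard fact that the image of a dense constructible set under a morphism is dense in the image of the closure, together with the observation that $\mu$ is a morphism on the open set where no coordinate product vanishes. Verifying that $X$ itself is cut out with the claimed parameterization is the one ``external'' ingredient, and it is exactly the classical description of secant varieties of Segre products of $\pr^1$'s cited in the text.
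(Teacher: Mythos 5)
Your argument is correct and is essentially the paper's own: the proposition is stated there as following directly from the construction, namely the distributive-law factorization of the parameterization $p_{ijkl}=(\sum_s a_{si}b_{sj}c_{sk}d_{sl})(\sum_r e_{ri}f_{rj}g_{rk}h_{rl})$ through two copies of the secant-of-Segre parameterization, which is exactly what you spell out. Your added care with Zariski closures and the locus where the coordinatewise product is nonzero is a sound elaboration of the same bookkeeping, not a different route.
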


Notice that the binary nature of our random variables enables us to
define a natural $\Sn_2$-action by permuting the values $0$ and $1$ on
each index in our 4-tuples. Combining this with the $\Sn_4$-action on
the 4-tuples of indices, we see that our model comes equipped with a
natural $\Sn_4 \ltimes (\Sn_2)^4$-action.  In other words, the 16
coordinates $p_{ijkl}$ of $\pr^{15}$, for $i,j,k,l \in \{0,1\}$, are
in natural bijection with the vertices of a 4-dimensional
cube. Assuming $\cM$ is a hypersurface (as we will prove in
Section~\ref{sec:tropicalLand}), its
defining polynomial is invariant under the group $B_4$ of
symmetries of the 4-cube, which has order 384.  This group action will be \emph{extremely}
helpful for our computations in the next two sections.

We now describe the ideal associated to the secant variety
$\Sec(\pr^1\times \pr^1\times \pr^1 \times \pr^1)$.  The Segre
embedding $\pr^1\times \pr^1\times \pr^1\times \pr^1 \hookrightarrow
\pr^{15}$ has a monomial parameterization $p_{ijkl}=u_i \cdot v_j
\cdot w_k \cdot x_l$ for $i,j,k,l\in \{0,1\}$.  Its defining prime
ideal is generated by the $2\times 2$-minors of all three $4\times
4$-flattenings, together with some $2\times 2$-minors of the $2\times
8$-flattenings \cite[Section 3]{PhyAlgGeom}:
\[F_{(12|34)}\!:=\!\!
\left(\begin{array}[l]{cccc}
  \!\!p_{0000}\! & \!p_{0001}\! & \!p_{0010}\! &\! p_{0011} \!\! \\
  \!\!p_{0100}\! & \!p_{0101}\! & \!p_{0110}\! & \!p_{0111} \!\! \\ 
 \!\!p_{1000}\! & \!p_{1001}\! & \!p_{1010} \!& \!p_{0111} \!\! \\
  \!\!p_{1100}\! & \!p_{1101}\! &\! p_{1110} \!& \!p_{1111}\!\! 
\end{array}
\right),~~
F_{(13|24)}\!:=\!\!
\left(\begin{array}[c]{cccc}
\!\!   p_{0000} \!&\! p_{0001}\! & \!p_{0100}\! & \! p_{0101} \!\! \\
\!\!   p_{0010}\! & \!p_{0011}\! & \!p_{0110}\! & \! p_{0111} \!\! \\
\!\!   p_{1000}\! & \!p_{1001}\! & \!p_{1100}\! & \! p_{1101} \!\!  \\
\!\!   p_{1010}\! &\! p_{1011}\! &\! p_{1110}\! &\! p_{1111} \!\!  
\end{array}
\right)
,\]
\[
F_{(14|23)}\!:=\!\!
\left(
\begin{array}[r]{cccc}
\!\!     p_{0000}\! & \!p_{0010}\! & \!p_{0100}\! & \! p_{0110} \!\! \\
\!\!  p_{0001}\! &\! p_{0011}\! & \! p_{0101}\! & \!p_{0111} \!\! \\
\!\!   p_{1000}\! & \!p_{1010}\! & \!p_{1100}\! & \!p_{1110} \!\! \\
\!\!   p_{1001}\! & \!p_{1011}\! & \!p_{1101}\! & \!p_{1111} \!\! 
\end{array}
\right).
\]
In turn, the defining ideal of the first secant variety of the Segre
embedding can be computed from the previous three $4\times
4$-flattening matrices. We state the result for the case of the
variety we are studying, although the set-theoretic result is also
true for an arbitrary number of observed nodes.
\begin{theorem} [\cite{LM,LW}]
  The secant variety $X=\Sec(\pr^1\times \pr^1\times \pr^1\times
  \pr^1)\subset \pr^{15}$ is the nine-dimensional \emph{irreducible}
  subvariety consisting of all $2\times 2\times 2 \times 2$-tensors of
  tensor rank at most 2.  The prime ideal of $X$ is generated by all
  the $3\times 3$-minors of the three flattenings. 
\end{theorem}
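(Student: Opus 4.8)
The plan is to prove the three assertions separately: that $X$ is irreducible of dimension nine, that the $3\times 3$-minors of the three $4\times 4$-flattenings cut $X$ out as a set, and that these minors in fact generate the full (hence prime) defining ideal. For the first, recall that by construction $X$ is the Zariski closure of the union of the secant lines of the Segre variety $Y:=\pr^1\times\pr^1\times\pr^1\times\pr^1$, so it is the closure of the image of the irreducible affine cone $\widehat Y\times\widehat Y$ under $(v,w)\mapsto[v+w]$; hence $X$ is irreducible. Its dimension is the expected secant dimension $\min(2\cdot 4+1,\,15)=9$: by Terracini's lemma the affine tangent space to $X$ at a general point $[v+w]$ of a secant line equals $\widehat T_v\widehat Y+\widehat T_w\widehat Y$, and the known non-defectivity for the secant line variety of a Segre product of $\pr^1$'s with at least three factors (or a direct rank computation of two generic such tangent spaces inside $\CC^{16}$) shows this span has dimension $10$.

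Next, the set-theoretic description. Write $J$ for the ideal generated by all $3\times 3$-minors of $F_{(12|34)}$, $F_{(13|24)}$, $F_{(14|23)}$. One inclusion is immediate: flattening a tensor of rank at most two produces a $4\times 4$ matrix of rank at most two, so every $3\times 3$-minor vanishes on the dense set of rank-$\le 2$ tensors and hence on $X$; thus $X\subseteq V(J)$. For $V(J)\subseteq X$ I would argue by normal forms under the action of $\mathrm{GL}_2^{\times 4}$ on $\CC^2\otimes\CC^2\otimes\CC^2\otimes\CC^2$. Let $T$ be a tensor all three of whose $4\times 4$-flattenings have rank at most two. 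If all three have rank at most one, then $T$ has rank at most one. Otherwise, say $F_{(12|34)}(T)$ has rank exactly two; its image is a pencil of $2\times 2$ matrices, and since the rank-$\le 1$ matrices form a smooth quadric surface in $\pr^3$, a generic such pencil meets it in two points $u\otimes v$. Feeding these two rank-one matrices back through the remaining flattening conditions on $T$ produces a presentation of $T$ as a sum of two rank-one tensors; the degenerate configurations of the pencil (tangency to the quadric, containment in it, coincident intersection points) yield instead tensors of border rank two, which still lie in $X$ because $X$ is closed. Alternatively one can quote the general set-theoretic description of the secant line variety of an arbitrary Segre product of projective spaces by $3\times 3$-flattening minors.

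Finally, primeness. It remains to show $J$ is radical; granting that, $\sqrt J=I(X)$ is prime because $X$ is irreducible. The cleanest route is to invoke the theorem of Landsberg and Weyman \cite{LW} (building on \cite{LM}) that the ideal of the secant line variety of a Segre product of projective spaces is generated in degree three by the $3\times 3$-minors of its flattenings, and to observe that in the $(\pr^1)^{\times 4}$ case only the three $4\times 4$-flattenings can contribute, since the four $2\times 8$-flattenings have no $3\times 3$-minors at all. A self-contained alternative, practical because the example is so small, is a Gr\"obner-basis argument: exhibit a term order for which the initial ideal of $J$ is squarefree and of Krull dimension $10$ in $\CC[p_{ijkl}]$; a squarefree monomial ideal is radical, so $J$ is radical, and the dimension count then forces $\sqrt J=I(X)$.

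I expect this last step to be the real obstacle. Vanishing of the $3\times 3$-minors is an easy necessary condition, and even the set-theoretic converse is, at bottom, a finite multilinear-algebra case analysis; but upgrading ``$V(J)=X$'' to ``$J$ is the prime ideal of $X$'' — that no lower-degree or extra generators are needed and that $J$ carries no embedded or thickened scheme structure — is precisely the delicate point that in the literature is settled by representation theory or Gr\"obner degenerations rather than by bare hands.
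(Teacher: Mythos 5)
The paper gives no proof of this theorem---it is quoted directly from \cite{LM,LW}---and your argument rests on exactly those same references for the decisive point (that the $3\times 3$ minors of the three $4\times 4$ flattenings generate the prime ideal), so you are in essence following the paper's route, with reasonable added sketches of the easier parts (irreducibility, the Terracini dimension count, the set-theoretic inclusion, and the observation that the $2\times 8$ flattenings contribute no $3\times 3$ minors). One small remark: once $J$ is radical and $V(J)=X$ set-theoretically, the Nullstellensatz already gives $J=I(X)$, so the dimension count in your Gr\"obner-basis alternative is not the step doing the real work there.
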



\section{Tropicalizing the model}\label{sec:tropicalLand}


In this section we define tropicalizations of varieties in $\CC^n$ and
compute the tropicalization of $\cM$.  See
\cite{BJSST, FirstSteps} for more details about tropical varieties.

\begin{definition}
  For an algebraic variety $X\subset \CC^n$ not contained in a
  coordinate hyperplane and  with defining ideal
  $I=I(X)\subset K[x_1, \ldots, x_n]$, the \emph{tropicalization} of
  $X$ or $I$ is defined as:
\[
\TropC(X) = \cT(I) = \{ w\in \RR^{n} \, |\, \init_w(I) \textrm{ contains no
  monomial}\},
\]
where $\init_w(I)=\langle \init_w(f): f\in I \rangle$, and
$\init_w(f)$ is the sum of all nonzero terms of $f=\sum_\alpha
c_{\alpha} x^{\alpha}$ such that $\alpha \cdot w$ is maximum.

Alternatively, when working with subvarieties of tori $V \subset
(\CC^*)^n$ we consider the defining ideal $I$ over the ring of Laurent
polynomials and set
\[
\cT(V)=\cT(I)=\{ w\in \RR^{n} \, |\, \init_w(I)\neq \langle 1\rangle \}.
\]
Both definitions agree if we consider $X$ to be the Zariski closure of $V$ in
$\CC^n$. We would go back and forth between these two definitions.
\end{definition}

The tropical variety $\cT(I)$ is a polyhedral subfan of the
Gr\"{o}bner fan of $I$.  If $I$ is a prime ideal containing no monomials, then $\cT(I)$ is pure of the
same dimension as $X$ and is connected in codimension one
\cite{BJSST}.
The set $\{w\in \TropC(I) : \init_w (I) = I \}$ is a linear space in
$\RR^n$ and is called the \emph{lineality space} of the fan
$\cT(I)$ or the {\em homogeneity space} of the ideal $I$. This space
can be spanned by integer vectors, which form a primitive lattice
$\Lambda$. This lattice encodes the action of a maximal torus
on $X$, given by a diagonal action. All cones in
$\cT(I)$ contain this linear space.

In addition to their polyhedral structure, tropical varieties are
equipped with integer positive weights on all of their maximal
cones. We now explain how these numbers can be constructed.  A point
$w \in \cT(I)$ is called {\em regular} if $\cT(I)$ is a linear space
locally near $w$.  The {\em multiplicity} $m_{w}$ of a regular point
$w$ is the sum of multiplicities of all minimal associated primes of
the initial ideal $\init_w(I)$.  See \cite[Section 3.6]{CommAlg} for
definitions.  The multiplicity of a maximal cone $\sigma \subset
\cT(I)$ is defined to be equal to $m_w$ for any $w \in \sigma$ in its
relative interior.  It can be showed that this assignment does not
depend on the choice of $w$.  With these multiplicities, the tropical
variety satisfies the {\em balancing condition} \cite{ElimTheory}.

As we discussed in the previous section
(Proposition~\ref{pr:HadamardProduct}) our variety is expressed as a
Hadamard power of a well-known variety. This Hadamard square has a
dense set which can be parameterized in terms of a monomial map (the
coordinatewise product of two points).  The integer matrix of
exponents corresponding to this monomial map is $(I_{n}\mid
I_n)\subset \ZZ^{n\times 2n}$. Although tropicalization is not
functorial in general, it has nice properties if we restrict it to
monomial maps between subvarieties of tori.

We now describe the tropicalization of monomial maps.  Let $A$ be a $d
\times r$ integer matrix defining a monomial map $\alpha\colon (\CC^*)^r
\rightarrow (\CC^*)^d$ and a linear map $A\colon \RR^r \rightarrow \RR^d$
defined by left multiplication by this matrix.
\begin{theorem}\cite{ElimTheory, TrIm}
\label{thm:ST}
Let $V \subset (\CC^*)^r$ be a subvariety.  Then 
$$
\cT(\alpha(V)) ~~ = ~~ A(\cT(V)).
$$
Moreover, if $\alpha$ induces a generically finite morphism of degree
$\delta$ on $V$, then the multiplicity of $\cT(\alpha(V))$ at a
regular point $w$ is
$$
m_w = \frac{1}{\delta} \cdot \sum_v m_v \cdot \text{ index
}(\mathbb{L}_w \cap \ZZ^d : A(\mathbb{L}_v \cap \ZZ^r)),
$$
where the sum is over all points $v \in \cT(V)$ with $A v = w$.  We
also assume that the number of such $v$ is finite, all of them are
regular in $\cT(V)$, and $\mathbb{L}_v, \mathbb{L}_w$ are linear spans
of neighborhoods of $v \in \cT(V)$ and $w \in A \cT(V)$ respectively.
\end{theorem}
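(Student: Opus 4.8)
The statement has two parts: the set-theoretic equality $\cT(\alpha(V)) = A(\cT(V))$, and the multiplicity formula. The plan is to treat them separately, citing the set-theoretic part and giving a self-contained argument for the multiplicities by reduction to the case of torus-equivariant maps. For the first part, I would invoke the fact that tropicalization is a homomorphism of valued-coordinate structures: a monomial map $\alpha$ with exponent matrix $A$ induces, on the level of Puiseux-series points and their valuations, exactly the linear map $A$, so $\mathrm{val}(\alpha(V)(\CC\{\{t\}\})) = A(\mathrm{val}(V(\CC\{\{t\}\})))$, and by the Fundamental Theorem of Tropical Geometry both sides equal the tropical varieties in question (closure is automatic since $A$ is linear and $\cT(V)$ is a finite union of rational polyhedral cones, hence $A(\cT(V))$ is closed). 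This is precisely \cite{ElimTheory, TrIm}, so here I would only sketch it.

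The substantive part is the multiplicity formula at a regular point $w \in A(\cT(V))$. First I would reduce to a local, linear-algebra statement: fix $w$ in the relative interior of a maximal cone $\tau$ of $A(\cT(V))$, and let $v_1, \dots, v_k$ be the (finitely many, by hypothesis) preimages in $\cT(V)$, each regular with linear span $\mathbb{L}_{v_i}$, and $\mathbb{L}_w = \mathrm{span}(\tau)$. After a monomial change of coordinates on the source and target tori (which does not change multiplicities — these are intrinsic), I may assume $\mathbb{L}_w$ and each $A(\mathbb{L}_{v_i})$ are coordinate subspaces, and then the contribution of $v_i$ is governed by the degree of $\alpha$ restricted to the component of $V$ whose tropicalization is locally $\mathbb{L}_{v_i}$, together with the lattice index $[\mathbb{L}_w \cap \ZZ^d : A(\mathbb{L}_{v_i} \cap \ZZ^r)]$ accounting for the ramification of $A$ along the lattice. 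The key input is the projection formula for multiplicities in intersection theory / the behavior of initial ideals under monomial maps: $\init_w(I(\alpha(V)))$ is, up to radical, the image of $\init_{v_i}(I(V))$, and the multiplicity of a minimal prime is multiplicative along the finite extension, with the lattice index measuring the inseparable/ramified part coming from $A$. Summing over the $v_i$ and dividing by the global degree $\delta$ of $\alpha$ on $V$ (to avoid overcounting when $\alpha$ is not generically injective across components) yields the formula. I would organize this as: (i) reduce to a single component of $V$ and to $\delta = 1$ by additivity of multiplicities over components and multiplicativity of degrees; (ii) use the push-forward/projection formula for tropical cycles under linear maps \cite{ElimTheory} together with the identification of tropical multiplicity as a sum of lengths of the initial ideal \cite{CommAlg}; (iii) extract the lattice-index factor from the fact that $A$ restricted to $\mathbb{L}_{v_i} \cap \ZZ^r$ need not be surjective onto $\mathbb{L}_w \cap \ZZ^d$.

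The main obstacle I anticipate is item (iii): carefully matching the algebraic multiplicity of the initial ideal $\init_w(I(\alpha(V)))$ — defined via lengths of localizations at minimal primes — with the geometric lattice index, in the presence of several preimages whose images $A(\mathbb{L}_{v_i})$ may coincide as real subspaces but differ as lattices. The cleanest route is probably to pass to the associated tropical cycles and apply the general push-forward theorem for balanced weighted fans under lattice maps (as in \cite{ElimTheory}), for which this index factor is exactly the definition of the push-forward weight; then the only remaining work is to check that tropicalization commutes with this push-forward, i.e. that $\cT(\alpha(V))$ with its intrinsic multiplicities equals $A_*(\cT(V))$ as tropical cycles, which again is the content of \cite{TrIm}. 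Thus the proof is largely a matter of assembling these two cited results and verifying the finiteness and regularity hypotheses make the push-forward formula specialize to the displayed sum. I expect no genuinely new difficulty beyond bookkeeping, which is why in the paper this is stated as a known theorem with references rather than proved from scratch; the generalization we actually need and prove is Theorem~\ref{thm:STGeneralized}.
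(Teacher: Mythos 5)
Your proposal matches the paper's treatment: Theorem~\ref{thm:ST} is not proved in the paper but is quoted from \cite{ElimTheory, TrIm}, and your argument correctly reduces both the set-theoretic equality and the multiplicity formula to exactly those sources (Kapranov/fundamental-theorem argument for $\cT(\alpha(V))=A(\cT(V))$, and the push-forward formula for the weights), which is all that is required here. The genuinely new content in the paper is the extension to non-generically-finite maps modulo torus actions, namely Theorem~\ref{thm:STGeneralized} and Lemma~\ref{lm:MultiModOutByLineality}, as you note.
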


At first sight, the hypothesis of this theorem is not satisfied by
our variety because the map $\alpha_{|_{X\times X}}$ is not generically
finite. However it is very close to having this finiteness
behavior. Namely, after taking the quotient $X'$ of $X$ by a maximal
torus action, and a choice of a suitable monomial map
$\overline{\alpha}$, the map $\overline{\alpha}_{|_{X'\times X'}}$ becomes
generically finite and we can apply Theorem~\ref{thm:ST}. We now
explain this reduction process.

Let $V\subset (\CC^*)^r$ a subvariety, $\alpha\colon (\CC^*)^r \to
(\CC^*)^d$ a monomial map, and let $W=\alpha(V)$. Consider the
lineality space $\RR\otimes_{\ZZ}\Lambda\subset \cT(V)$, and let
$\Lambda'=A(\Lambda)$. We identify $\RR\otimes_{\ZZ}\Lambda$ with a
$\ZZ$-basis of $\Lambda= (\RR \otimes_{\ZZ}\Lambda)\cap \ZZ^r$. Notice
that $\Lambda'$ need not be a primitive lattice in $\ZZ^d$ in
general. Call $(\Lambda')^{\sat}$ its saturation in $\ZZ^d$, that is
$(\Lambda')^{\sat}=(\RR\otimes_{\ZZ} \Lambda') \cap \ZZ^d$. 
We know by construction and Theorem~\ref{thm:ST} that
$\RR\otimes_{\ZZ}\Lambda'$ is contained in the lineality space of
$\cT(W)$. Therefore, we can consider the linear map between these
tropical varieties after moding out by $\RR\otimes_{\ZZ}\Lambda$ and
$\RR\otimes_{\ZZ}\Lambda'$ respectively. As we mentioned earlier, the
lineality space of each tropical variety determines the maximal torus
action. For example, $(\CC^*)^r$ acts on $V$ by $t\cdot (x_1, \ldots,
x_n) :=(t^{a_1}x_1, \ldots, t^{a_r}x_r)$ where $\underline{a}$ lies in
$\Lambda$.

The linear map $A$ sends $\Lambda$ onto $\Lambda'$, inside
the lineality space of $\cT(\alpha(V))$. In addition, the monomial map
$\alpha$ is compatible with the torus actions on $V$ and $\alpha(V)$.
In particular, the equality $\alpha(\Lambda\otimes_{\ZZ}\CC^*)=
\Lambda'\otimes_{\ZZ} \CC^*$ induces an action on $W$ by a subtorus
(the one corresponding to the primitive lattice
$(\Lambda')^{\sat}$). Thus, we can take the quotient of $V$ and $W$ by
the corresponding actions of tori $H$ and $H'$. We obtain the
commutative diagram:
\begin{equation}
{\xymatrix{
V\ar@{->>}[d]_{\pi} \ar@{->>}[r]^{\alpha} & W\ar@{->>}[d]^{\pi}\\
V'=V/H \ar@{->>}[r]^- {\bar{\alpha}} & W/H'=W'.
}}
\label{eq:1}
\end{equation}
Here, $H=\Lambda \otimes_{\ZZ} \CC^*\cong (\CC^*)^{\dim \Lambda}$ and
$H'=\Lambda'\otimes_{\ZZ} \CC^*\cong (\CC^*)^{\dim
  \Lambda'}$.  Since $\Lambda$ is a primitive
sublattice of $\ZZ^r$, it admits a primitive complement in
$\ZZ^r$. Fix one of them and call it $\Lambda^{\perp}$. Note that
this complement need not be the usual orthogonal complement.

Assume for simplicity that $\Lambda'$ is a \emph{primitive} sublattice of
$\ZZ^d$.  Therefore, we can identify $\bar{\alpha}$ with the monomial
map corresponding to the linear map:
\[
A'\colon (\RR\otimes \ZZ^r)/(\RR\otimes \Lambda) =
\RR\otimes \Lambda^{\perp}=:\!(\RR\otimes \Lambda)^{\perp}\! \to
(\RR\otimes \ZZ^d/(\RR\otimes \Lambda')
=\! \RR\otimes \Lambda'^{\perp}\!\!=:(\RR\otimes \Lambda')^{\perp}.
\]
 Since $\Lambda$ is primitive,  $(\RR\otimes \Lambda)^{\perp} \cap \ZZ^r=\Lambda^\perp$, and likewise
 for $\Lambda'^{\perp}$.

 To simplify notation, call $L:=\RR\otimes \Lambda$ and
 $L':=\RR\otimes \Lambda'$. From the construction it is easy to see
 that $\cT(V') = \cT(V)/L$ and $\cT(W')=\cT(W)/L'$ as sets. But in
 fact, they agree as weighted balanced polyhedral fans. More
 precisely,

\begin{lemma}\label{lm:MultiModOutByLineality}
  Let $X\subset (\CC^*)^{r}$ and let $L$ be a subspace of the lineality
  space of the tropical variety $\cT(X)$ generated by \emph{integer}
  vectors. Then $\cT(X)/L$ is a balanced weighted polyhedral fan where
  the multiplicities at regular points ${w}'$ are defined as
  $m_{{w}'}=m_{w}$ for any $w$ in the fiber of ${w}'$ under the
  projection map.  With these weights, $\cT(X)/L$ coincides with the
  tropical variety $\cT(X')$, where $X'$ is the quotient of $X$ by the
  torus $(L\cap \ZZ^{r})\otimes_{\ZZ} \CC^*\cong (\CC^*)^{\dim L}$,
  which is a subtorus of the maximal torus acting on $X$.
\end{lemma}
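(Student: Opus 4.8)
The plan is to reduce to the case where $L$ is a coordinate subspace of $\RR^r$, where $X$ visibly splits off a torus factor, and then carry out the multiplicity bookkeeping by hand. Since $L$ is spanned by integer vectors, the lattice $\Lambda_L := L \cap \ZZ^r$ satisfies $\RR\otimes_\ZZ\Lambda_L = L$, hence is saturated in $\ZZ^r$ and therefore a direct summand. Choosing a $\ZZ$-basis of $\ZZ^r$ whose first $k := \dim L$ members span $\Lambda_L$ gives a monomial automorphism of $(\CC^*)^r$ carrying $L$ to $\RR^k\times\{0\}$, $\Lambda_L$ to $\ZZ^k\times\{0\}$, and the subtorus $H := \Lambda_L\otimes_\ZZ\CC^*$ to $(\CC^*)^k\times\{1\}$. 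Monomial automorphisms of the ambient torus act compatibly on ideals, initial ideals, tropical fans and multiplicities, so it is harmless to assume henceforth $L = \RR^k\times\{0\}$. The hypothesis that $L$ lies in the lineality (homogeneity) space of $I := I(X)$ then says exactly that $\init_w(I) = I$ for all $w\in L$, i.e.\ that $I$ is homogeneous for the multigrading $\deg x_i = e_i$ $(1\le i\le k)$, i.e.\ that $X$ is invariant under $H = (\CC^*)^k$ acting by rescaling the first $k$ coordinates. Writing a point of $(\CC^*)^r$ as $(a,b)\in(\CC^*)^k\times(\CC^*)^{r-k}$, invariance forces $(\CC^*)^k\times\{b\}\subseteq X$ whenever $(a,b)\in X$, so $X = (\CC^*)^k\times X'$ with $X' := X\cap\bigl(\{1\}^k\times(\CC^*)^{r-k}\bigr)$ a closed subvariety of $(\CC^*)^{r-k}$; this $X'$ is precisely the quotient $X/H$, and $I = I'\cdot\CC[x_1^{\pm1},\dots,x_r^{\pm1}]$ is the extension of $I' := I(X')\subseteq\CC[x_{k+1}^{\pm1},\dots,x_r^{\pm1}]$.

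With this product structure the tropical assertions follow by a direct check. For $w = (w_1,w_2)\in\RR^k\times\RR^{r-k}$ one verifies $\init_w(I) = \init_{w_2}(I')\cdot\CC[x_1^{\pm1},\dots,x_r^{\pm1}]$, which is proper iff $\init_{w_2}(I')$ is; hence $\cT(X) = \RR^k\times\cT(X')$, the projection $\RR^r\to\RR^r/L$ is simply $(w_1,w_2)\mapsto w_2$, and $\cT(X)/L = \cT(X')$ as sets. Every cone of $\cT(X)$ contains $L$, so the quotient inherits a polyhedral fan structure, and $(w_1,w_2)$ is a regular point of $\cT(X)$ exactly when $w_2$ is regular in $\cT(X')$. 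For the weights I would argue that, since $\CC[x_{k+1}^{\pm1},\dots,x_r^{\pm1}]\hookrightarrow\CC[x_1^{\pm1},\dots,x_r^{\pm1}]$ is a Laurent-polynomial (hence faithfully flat and regular) extension, the minimal associated primes of $\init_w(I)$ are exactly the extensions $\mathfrak p\cdot\CC[x^{\pm1}]$ of the minimal primes $\mathfrak p$ of $\init_{w_2}(I')$, and localizing at $\mathfrak p\cdot\CC[x^{\pm1}]$ rather than at $\mathfrak p$ enlarges the ambient ring only by a Laurent-polynomial extension, which leaves the length $\lgh_{R_{\mathfrak p}}\bigl((R/J)_{\mathfrak p}\bigr)$ of the Artinian quotient unchanged (a composition series tensors up to one of the same length whose factors are the new residue field, a rational function field over the old one). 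Therefore $m_w = m_{w_2}$, which simultaneously shows that $m_{w'}$ is well defined (independent of the chosen lift $w$ in the fiber over $w'$), that it coincides with the multiplicity of $\cT(X')$ at $w'$, and hence that $\cT(X)/L$, equipped with these weights, is literally the weighted balanced fan $\cT(X')$ — balancedness being the balancing theorem for tropical varieties applied to $X'$ (equivalently, the image of the balanced fan $\cT(X)$ under the quotient projection).

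The only genuinely delicate step is the multiplicity computation, namely the invariance of the length of a minimal prime under base change from $R = \CC[x_{k+1}^{\pm1},\dots,x_r^{\pm1}]$ to $R[x_1^{\pm1},\dots,x_k^{\pm1}]$; everything else is formal once the decomposition $X = (\CC^*)^k\times X'$ is in place. I would dispatch it using the standard facts that $\mathfrak p\mapsto\mathfrak p R[x_1^{\pm1},\dots,x_k^{\pm1}]$ is a bijection between minimal primes of $J$ and of its extension, and that for an Artinian local ring $A$ with maximal ideal $\mathfrak m$ the localization of $A[t_1^{\pm1},\dots,t_k^{\pm1}]$ at the prime generated by $\mathfrak m$ is again Artinian local of the same length.
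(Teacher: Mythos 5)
Your proposal is correct and follows essentially the same route as the paper's proof: change coordinates by a monomial automorphism so that $L$ becomes a coordinate subspace, use the homogeneity of $I$ to see that $I$ is extended from an ideal $I'$ in the remaining variables (equivalently $X$ splits off a torus factor), deduce the set-theoretic and fan-structure statements from $\init_w(I)=\init_{w'}(I')\cdot\CC[x_1^{\pm1},\dots,x_r^{\pm1}]$, and prove $m_w=m_{w'}$ by showing that the length of the localization at a minimal prime is unchanged under the faithfully flat Laurent-polynomial extension followed by localization at the extended prime. The only cosmetic difference is that the paper cites Theorem~\ref{thm:ST} for the set-theoretic equality $\cT(X')=\cT(X)/L$, whereas you read it off directly from the product decomposition, which is an equivalent verification.
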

\begin{proof}
  By definition, we know that $\init_{w+L}(I)=\init_w(I)$ for any
  $w\in \RR^r$. Let $l:=\dim L$. Call  $\Lambda:=L\cap \ZZ^r$ the
  underlying lattice of $L$. Since $\Lambda$ is a primitive
  lattice, we can extend any $\ZZ$-basis of $\Lambda$ to a $\ZZ$-basis of
  $\ZZ^r$.  Thus, after a linear change of coordinates (i.e.~a
  monomial change of coordinates given by this new $\ZZ$-basis of
  $\ZZ^r$) we can assume $\Lambda=\ZZ\langle e_1, \ldots, e_l\rangle$. And in
  this case, we can pick the direct summand $\Lambda^{\perp}$ of
  $\Lambda$ to be $\ZZ\langle e_{l+1}, \ldots, e_{r}\rangle$.  In
  particular, the projection map $\pi\colon X\to X'=X/H$ corresponds
  to the monomial map $\alpha\colon (\CC^*)^r\to (\CC^*)^{r-l}$
  determined by the integer matrix $A \in \ZZ^{r\times (r-l)}$, whose
  columns are a $\ZZ$-basis of $\Lambda^{\perp}$.
  
  By construction, $I=I(X)\subset \CC[x_1^{\pm 1}, \ldots, x_r^{\pm
    1}]$ is homogeneous with respect to the grading $\deg(x_i)=e_i$ for $i\leq
  l$ and $\deg(x_j)=\underline{0}$ for $j>l$. Since any homogeneous Laurent
  polynomial is of the form $f=\underline{x}^{\alpha} g(x_{l+1},
  \ldots, x_r)$, we see that $I$ is generated by Laurent polynomials
  in the variables $\{x_{l+1}, \ldots x_r\}$. Call $g_1, \ldots, g_s$
  these generators.  Therefore $I'=I(X')=\langle g_1(x_{l+1}, \ldots,
  x_r), \ldots g_s(x_{l+1}, \ldots, x_r)\rangle \subset
  \CC[x_{l+1}^{\pm 1}, \ldots, x_r^{\pm 1}]$ and $I=I'\CC[x_1^{\pm 1},
  \ldots, x_r^{\pm 1}]$.

  From Theorem~\ref{thm:ST} we know that $\cT (X')= A\cT (X)=\cT(X)/L$
  as sets. Moreover, since the subspace $L$ lies in all cones of $\cT
  (X)$, then the set $\cT (X')$ which is the quotient of $\cT (X)$ by
  $L$ has a natural fan structure inherited from the one of $\cT(
  X)$. By definition, if ${w}'$ is a regular point in $\cT(X')$ then
  any lifting point in $w+L$ would be a regular point in
  $\cT(X)$. Moreover, $\init_w(I)=\init_{{w}'}(I')\CC[x_1^{\pm 1},
  \ldots, x_r^{\pm 1}]$. In particular, a primary decomposition
  $\init_{{w}'}(I')$ determines a primary decomposition of
  $\init_w(I)$ by extending each ideal to the whole Laurent polynomial
  ring in $n$ variables. Therefore, to show $m_{{w}'}=m_w$ it suffices
  to show that the multiplicity of any minimal prime $P\subset
  \CC[x_{l+1}^{\pm 1}, \ldots, x_r^{\pm 1}]$ of $\init_{{w}'}(I')$
  equals the multiplicity of $P\subset \CC[x_1^{\pm 1}, \ldots,
  x_r^{\pm 1}]$ in $\init_{w}(I)$. This claim follows from the
  definition of multiplicity. More precisely:
\begin{align*}
  m(P, \init_{{w}'}(I'))& = \dim_{\frac{S_P}{PS_P}}
  \frac{S_P}{S_P\init_{{w}'(I')}}= \dim_{(\frac{S_P}{PS_P})[x_{1}^{\pm
      1}\!\!\!,
    \ldots, x_l^{\pm 1}]} \frac{S_P[x_{1}^{\pm 1}\!\!\!, \ldots, x_l^{\pm
      1}]}{S_P[x_{1}^{\pm 1}\!\!\!, \ldots, x_l^{\pm
      1}]\init_{{w}'}(I')}\\ 
& =  \dim_{\frac{S[x_{1}^{\pm 1}\!\!\!, \ldots, x_l^{\pm
        1}]_P}{PS[x_{1}^{\pm 1}\!\!\!, \ldots, x_l^{\pm 1}]_P}}
  \frac{S[x_{1}^{\pm 1}\!\!\!, \ldots, x_l^{\pm 1}]_P}{S[x_{1}^{\pm 1}\!\!\!,
    \ldots, x_l^{\pm 1}]_P\init_w(I)}=m(P,\init_w(I)),
\end{align*}
where $S=\CC[x_{l+1}^{\pm 1}, \ldots,
x_r^{\pm 1}]$.
\end{proof}
Using the previous construction, we extend Theorem~\ref{thm:ST} to the
case of monomial maps that are generically finite after taking
quotients by appropriate tori. This extension fits perfectly into our
setting.

\begin{theorem} \label{thm:STGeneralized} Let $\alpha\colon  (\CC^*)^r \to
  (\CC^*)^d$ be a monomial map with associated integer matrix $A$ and let
  $V\subset (\CC^*)^r$ be a closed subvariety.
Then,
\[
\cT(\alpha(V)) ~~ = ~~ A(\cT(V)).
\]
Suppose $V$ has a torus action given by a rank $l$ lattice
$\Lambda\subset \ZZ^r$.  Let $V'$ be the quotient by this torus
action. Let $\overline{\alpha}\colon V'\to
(\CC^*)^d/\alpha(\Lambda\otimes_{\ZZ} \CC^*)$ be the induced monomial
map, with associated
integer matrix $A'$.\\
Suppose $\Lambda'=A(\Lambda)$ is a primitive sublattice of $\ZZ^d$ and
that $\bar{\alpha}$ induces a generically finite morphism of degree
$\delta$ on $V'$. Then the multiplicity of $\cT(\alpha(V))$ at a
regular point $w$ can be computed as:
\begin{equation}
m_w = \frac{1}{\delta} \cdot \sum_{\substack{\pi(v)\\A\cdot v=w}} m_v \cdot \text{ index
}(\mathbb{L}_{w} \cap \ZZ^d :
A(\mathbb{L}_{v} \cap \ZZ^r)),\label{eq:3}
\end{equation}
where the sum is over any set of representatives of points
$\{v'=\pi(v)\in \cT(V') \mid A'v'=w'\}$ given $w'=\pi(w)\in
\RR^d/(\RR\otimes_{\ZZ}\Lambda')=\RR\otimes_{\ZZ}\Lambda'^{\perp}$. We
also assume that the number of such $v'$ is finite, all of them are
regular in $\cT(V')$ and $\mathbb{L}_{v}, \mathbb{L}_{w}$ are linear
spans of neighborhoods of $v \in \cT(V)$ and $w\in A\cT(V)$
respectively.
\end{theorem}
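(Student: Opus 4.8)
The plan is to reduce Theorem~\ref{thm:STGeneralized} to the already-established Theorem~\ref{thm:ST} by passing to the quotient varieties $V'$ and $W'=\alpha(V)/H'$, and then to lift the statement about multiplicities back up through the commutative diagram~\eqref{eq:1}. The first step is the set-theoretic equality $\cT(\alpha(V)) = A(\cT(V))$, which is simply the first assertion of Theorem~\ref{thm:ST} applied verbatim to the monomial map $\alpha$; no quotient is needed here, so this part requires no new argument. The content of the theorem is therefore entirely in the multiplicity formula~\eqref{eq:3}, and this is where the reduction to the generically finite case via Lemma~\ref{lm:MultiModOutByLineality} does the work.

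Next I would set up the quotient picture precisely. Since $\Lambda \subset \ZZ^r$ is a rank-$l$ primitive lattice contained in the lineality space of $\cT(V)$, Lemma~\ref{lm:MultiModOutByLineality} identifies $\cT(V')$ with $\cT(V)/L$ as a weighted balanced fan, with $m_{v'} = m_v$ for any lift $v$ of $v'$. By hypothesis $\Lambda' = A(\Lambda)$ is primitive in $\ZZ^d$, so $H' = \Lambda'\otimes_\ZZ\CC^*$ is a subtorus of the maximal torus acting on $W = \alpha(V)$, and another application of Lemma~\ref{lm:MultiModOutByLineality} (now to $W$ with the subspace $L' = \RR\otimes\Lambda'$ of its lineality space) gives $\cT(W') = \cT(W)/L'$ with matching multiplicities $m_{w'} = m_w$. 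The induced map $\overline{\alpha}\colon V'\to W'$ is the monomial map with matrix $A'\colon \RR\otimes\Lambda^\perp \to \RR\otimes\Lambda'^\perp$ obtained by restricting/descending $A$; because $\overline\alpha$ is assumed generically finite of degree $\delta$ on $V'$, Theorem~\ref{thm:ST} applies to it directly and yields, for a regular $w'\in\cT(W')$,
\[
m_{w'} = \frac{1}{\delta}\sum_{\substack{v'\in\cT(V')\\ A'v' = w'}} m_{v'}\cdot \text{ index}\bigl(\mathbb{L}_{w'}\cap(\Lambda'^\perp) : A'(\mathbb{L}_{v'}\cap\Lambda^\perp)\bigr).
\]

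The remaining step is to translate this downstairs formula back into the upstairs formula~\eqref{eq:3}. Using $m_{w'} = m_w$ and $m_{v'} = m_v$, it suffices to check that the lattice index appearing in Theorem~\ref{thm:ST} applied to $\overline\alpha$ equals the one in~\eqref{eq:3} for the original $\alpha$. Here the key observation is that, with the coordinate choices as in the proof of Lemma~\ref{lm:MultiModOutByLineality} (extend a basis of $\Lambda$ to a basis of $\ZZ^r$, and likewise for $\Lambda'$), the linear span $\mathbb{L}_v$ of a neighborhood of $v$ in $\cT(V)$ decomposes as $L\oplus \mathbb{L}_{v'}$ (identifying $\mathbb{L}_{v'}$ with its preimage in $\Lambda^\perp$), and similarly $\mathbb{L}_w = L'\oplus \mathbb{L}_{w'}$; moreover $A$ maps $L$ isomorphically onto $L'$ and $A|_{\Lambda^\perp}$ descends to $A'$. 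Since $A(\Lambda) = \Lambda'$ is exactly all of $L'\cap\ZZ^d$ by the primitivity hypothesis, the index $(\mathbb{L}_w\cap\ZZ^d : A(\mathbb{L}_v\cap\ZZ^r))$ splits as a product over the $L'$-part (which contributes $1$, since $A$ carries the primitive lattice $\Lambda$ onto the primitive lattice $\Lambda'$) and the $\Lambda'^\perp$-part (which is precisely the index for $A'$). Finally, the fibers match up: $v\mapsto \pi(v)$ is a bijection between $\{v : Av = w\}/(L\cap\ZZ^r)$ and $\{v' : A'v' = w'\}$, and taking one representative $v$ per class $\pi(v)$ — as the sum in~\eqref{eq:3} prescribes — reproduces the downstairs sum term by term, giving $m_w = m_{w'}$ equal to the right-hand side of~\eqref{eq:3}.

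The main obstacle I anticipate is the bookkeeping in this last translation step: one must be careful that ``regular point'', ``multiplicity'', and the linear spans $\mathbb{L}_v,\mathbb{L}_w$ behave well under the quotient (which is exactly what Lemma~\ref{lm:MultiModOutByLineality} is designed to guarantee), and that the index factors cleanly as a product $1\times(\text{index for }A')$ rather than contributing an unwanted correction term — this is where the primitivity assumption on $\Lambda' = A(\Lambda)$ is essential and must be used explicitly. The general case where $\Lambda'$ is not primitive (only $(\Lambda')^{\sat}$ is) would introduce the index $((\Lambda')^{\sat} : \Lambda')$ as a correction, but since the theorem assumes $\Lambda'$ primitive, this complication does not arise and the argument is as above.
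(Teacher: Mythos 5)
Your proposal follows essentially the same route as the paper's proof: the set-theoretic equality from Theorem~\ref{thm:ST}, reduction to the quotients via Lemma~\ref{lm:MultiModOutByLineality} (giving $m_v=m_{\pi(v)}$ and $m_w=m_{\pi(w)}$), an application of Theorem~\ref{thm:ST} to the generically finite map $\overline{\alpha}$, and the identification of the upstairs and downstairs lattice indices by reducing modulo $\Lambda'$, using its primitivity, together with the matching of the fibers of $A$ and $A'$. One small inaccuracy: $A$ maps $L$ onto $L'$ but not isomorphically in general (in the paper's own application $\dim L=10$ while $\dim L'=5$); this is harmless, since your index computation really only uses $A(\Lambda)=\Lambda'$, the primitivity of $\Lambda'$, and the descent of $A$ to $A'$, which is exactly what the paper's quotient-by-$\Lambda'$ argument uses.
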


\begin{remark}
  In case $\Lambda'$ is not a primitive lattice, the formula for
  $m_w$ will involve an extra factor, namely, the index of
  $\Lambda'$ with respect to its saturation $\Lambda'^{\,\sat}$ in
  $\ZZ^d$. In this case, $\Lambda'^{\perp}$ will correspond to any
  complement of the primitive lattice $\Lambda'^{\,\sat}$ inside
  $\ZZ^d$.
\end{remark}

\begin{proof}[\textbf{Proof of 
Theorem~\ref{thm:STGeneralized}.}]
The equality as sets follows from Theorem~\ref{thm:ST}.  To prove the
formula for multiplicities, we first note that the sum in~\eqref{eq:3}
is finite. This follows because $\bar{\alpha}$ induces a generically
finite morphism if and only if $\ker{A'} \cap
\cT(V')=\{\underline{0}\}$ if and only if $A(\Lambda^{\perp}) \cap
\Lambda'=\{\underline{0}\}$.
  
From the diagram \eqref{eq:1} and the surjectivity of $\alpha$ and
$\bar{\alpha}$, we know that the multiplicity formula holds for
$\cT(Y')$ and the morphism $\bar{\alpha}$.  Pick $w'$ a regular point
of $\cT(X')$ and pick any point $w$ in the fiber
$\pi^{-1}(w')=w+(\RR\otimes \Lambda')$. By definition, $w$ is a
regular point of $\cT(X)$ and we have $m_w =m_{w'}$ by
Lemma~\ref{lm:MultiModOutByLineality}. We assume all $v'$ in the
fiber of $A'$ at $w'$ are regular in $\cT(V')$ and
$\mathbb{L}_{\pi(v)}, \mathbb{L}_{\pi(w)}$ are linear spans of
neighborhoods of $\pi(v) \in \cT(V')$ and $\pi(w)\in A'\cT(V')$
respectively.

  By construction, the index set in the formula for $m_{w'}$ agrees
  with the index set in formula~\eqref{eq:3} for $m_w$.
  Therefore, our goal would be to show that each summand indexed by
  $\pi(v)$ in the formula for $m_{w'}$ equals its corresponding summand in
  formula~\eqref{eq:3} for $m_w$. We know that $m_v=m_{\pi(v)}$ by
  Lemma~\ref{lm:MultiModOutByLineality}.
  Therefore, we only need to prove that the lattice indices on each
  summand are the same, i.e.

  \begin{equation}
    \label{eq:5}
    \text{ index
    }(\mathbb{L}_w \cap \ZZ^d : A(\mathbb{L}_v \cap \ZZ^r)) =
    \text{ index
    }(\mathbb{L}_{\pi(w)} \cap (\Lambda'^{\perp}) :
    A'(\mathbb{L}_{\pi(v)} \cap \Lambda^{\perp})).
  \end{equation}

Note that by construction, $\Lambda'\subset \mathbb{L}_w\cap \, \ZZ^d$,
$\Lambda\subset \mathbb{L}_v$, and likewise $A(\Lambda)=\Lambda'\subset
A(\mathbb{L}_v \cap \ZZ^r)$.
 Hence, we can consider the quotient of $\mathbb{L}_w\cap \ZZ^d$ and
 $A(\mathbb{L}_v \cap \ZZ^r)$ by $\Lambda'$. We obtain
 \[
 \frac{\mathbb{L}_w\cap \ZZ^d}
{ A(\mathbb{L}_v \cap \ZZ^r)} \cong
 \frac{(\mathbb{L}_w\cap \ZZ^d)/\Lambda'}{
 A(\mathbb{L}_v \cap \ZZ^r)/\Lambda'}.
\]
The equality in~\eqref{eq:5} follows by the identifications
$(\mathbb{L}_w\cap \ZZ^d)/\Lambda'=\mathbb{L}_{\pi(w)} \cap
(\Lambda'^{\perp})$ and ${ A(\mathbb{L}_v \cap
  \ZZ^n)/\Lambda'}=A'(\mathbb{L}_{\pi(v)}\cap \Lambda^{\perp})$,
via projecting to $\Lambda'^{\perp}$.
\end{proof}

\begin{theorem}\label{pr:cartesianProdAndMult}
  Given $X,Y\subset \CC^N$ two irreducible varieties, consider the
  associated variety $X \times Y\subset \CC^{2N}$. Then
\[
\cT(X\times Y) = \cT(X)\times \cT(Y)
\]
as weighted polyhedral complexes, with $m_{\sigma\times
  \tau}=m_{\sigma}m_{\tau}$ for maximal cones $\sigma \subset \cT(X),
\tau \subset \cT(Y),$ and $\sigma\times \tau \subset \cT(X \times Y)$.
\end{theorem}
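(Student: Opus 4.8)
The plan is to reduce the statement to an identity of initial ideals together with some commutative algebra over the algebraically closed field $\CC$. Write $I_1=I(X)\subset\CC[\x]:=\CC[x_1,\dots,x_N]$ and $I_2=I(Y)\subset\CC[\y]:=\CC[y_1,\dots,y_N]$, so that inside $\CC[\x,\y]$ we have $I(X\times Y)=I_1\CC[\x,\y]+I_2\CC[\x,\y]$. The crucial computation is that initial ideals respect this decomposition: for every $(w,v)\in\RR^N\times\RR^N$,
\[
\init_{(w,v)}\bigl(I(X\times Y)\bigr)\;=\;\init_w(I_1)\CC[\x,\y]+\init_v(I_2)\CC[\x,\y].
\]
This is a standard Gr\"obner basis argument. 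Pick term orders $\prec_1$ on $\CC[\x]$ and $\prec_2$ on $\CC[\y]$ refining $w$ and $v$, with reduced Gr\"obner bases $G_1$ of $I_1$ and $G_2$ of $I_2$, and choose a term order $\prec$ on $\CC[\x,\y]$ refining $(w,v)$ whose restrictions to the two subrings are $\prec_1$ and $\prec_2$. Since $G_1$ and $G_2$ involve disjoint variables, every cross $S$-polynomial has relatively prime leading monomials and reduces to zero, so $G_1\cup G_2$ is a Gr\"obner basis of $I(X\times Y)$ with respect to $\prec$; taking $\init_{(w,v)}$ of each generator (which acts as $\init_w$ on $G_1$ and as $\init_v$ on $G_2$) yields the displayed identity.

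From here the set-theoretic equality and the polyhedral structure follow quickly. Passing to the Laurent ring, $\init_{(w,v)}(I(X\times Y))$ is the unit ideal iff $V(\init_w(I_1))\times V(\init_v(I_2))$ is empty in $(\CC^*)^{2N}$, which—because $\CC$ is algebraically closed—happens iff one of the two factors is empty, i.e.\ iff $\init_w(I_1)=\langle 1\rangle$ or $\init_v(I_2)=\langle 1\rangle$. Hence $(w,v)\in\cT(X\times Y)$ iff $w\in\cT(X)$ and $v\in\cT(Y)$. For the fan structure, the displayed identity shows that $\init_{(w',v')}(I(X\times Y))=\init_{(w,v)}(I(X\times Y))$ iff $\init_{w'}(I_1)=\init_w(I_1)$ and $\init_{v'}(I_2)=\init_v(I_2)$: the summand $\init_w(I_1)$ is recovered from the sum by specializing the $\y$-variables to a point of $V(\init_v(I_2))$, which is possible since $\init_v(I_2)$ is a proper ideal on $\cT(X\times Y)$. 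Therefore the Gr\"obner cones of $I(X\times Y)$ meeting $\cT(X\times Y)$ are exactly the products of Gr\"obner cones of $I_1$ and $I_2$, so $\cT(X\times Y)=\cT(X)\times\cT(Y)$ as polyhedral complexes; since $X,Y$ are irreducible these are pure of dimension $\dim X+\dim Y$, with maximal cones precisely the products $\sigma\times\tau$ of maximal cones $\sigma\subset\cT(X)$ and $\tau\subset\cT(Y)$.

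It remains to identify the weights. Fix maximal cones $\sigma\subset\cT(X)$, $\tau\subset\cT(Y)$, pick $w,v$ in their relative interiors (both regular points), and work in the Laurent ring, where $\init_{(w,v)}(I(X\times Y))=\init_w(I_1)+\init_v(I_2)$ and
\[
\CC[\x^{\pm},\y^{\pm}]\big/\bigl(\init_w(I_1)+\init_v(I_2)\bigr)\;\cong\;\bigl(\CC[\x^{\pm}]/\init_w(I_1)\bigr)\otimes_\CC\bigl(\CC[\y^{\pm}]/\init_v(I_2)\bigr).
\]
Because $\CC$ is algebraically closed, a product of irreducible varieties is irreducible, so the minimal primes of the left-hand ring are exactly the ideals $\mathfrak p+\mathfrak q$ with $\mathfrak p$ minimal over $\init_w(I_1)$ and $\mathfrak q$ minimal over $\init_v(I_2)$; localizing at such a prime gives $\bigl(\CC[\x^{\pm}]/\init_w(I_1)\bigr)_{\mathfrak p}\otimes_\CC\bigl(\CC[\y^{\pm}]/\init_v(I_2)\bigr)_{\mathfrak q}$ localized at its unique minimal prime. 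Lifting composition series of the two Artinian local tensor factors—using that $\kappa(\mathfrak p)\otimes_\CC\kappa(\mathfrak q)$ is a domain whose fraction field is the residue field of $\mathfrak p+\mathfrak q$—shows that lengths multiply, i.e.\ $\mult(\mathfrak p+\mathfrak q)=\mult(\mathfrak p)\cdot\mult(\mathfrak q)$. Summing over all pairs,
\[
m_{\sigma\times\tau}\;=\;\sum_{\mathfrak p,\mathfrak q}\mult(\mathfrak p)\,\mult(\mathfrak q)\;=\;\Bigl(\sum_{\mathfrak p}\mult(\mathfrak p)\Bigr)\Bigl(\sum_{\mathfrak q}\mult(\mathfrak q)\Bigr)\;=\;m_\sigma\,m_\tau.
\]

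The main obstacle is the multiplicity bookkeeping of the last paragraph: one must verify carefully that the minimal primes of a sum of ideals in disjoint variable sets over an algebraically closed field are precisely the pairwise sums, and that the associated local multiplicities (lengths) are multiplicative. This is exactly where algebraic closedness of $\CC$ is indispensable (the statement can fail over non-closed fields), and where one must be slightly careful with composition series of Artinian local rings that are not finite-dimensional over $\CC$. The Gr\"obner step and the set-theoretic/fan arguments are routine by comparison.
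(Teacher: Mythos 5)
Your proposal is correct and follows essentially the same route as the paper: the identity $\init_{(u,v)}(I+J)=\init_u I+\init_v J$ via Buchberger's criterion for generators in disjoint variables, the identification of the minimal primes of the sum as sums of minimal primes (using that $\CC$ is algebraically closed so sums of primes in disjoint variables are prime), and multiplicativity of the local multiplicities at a pair of regular points. The only difference is cosmetic: where you lift composition series of the two Artinian local factors and localize at the unique minimal prime, the paper's Lemma~\ref{lm:prodLenght} obtains the same length count by showing that $M\otimes_\CC N$ is free of rank $sr$ over $F\otimes_\CC G$ through an explicit basis and linear-independence argument, then tensoring with the residue field $L$.
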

\begin{proof}
  The equality as polyhedral complexes is a direct consequence of the
  equality $\init_{(u,v)}(I+J)=\init_{u}I + \init_v J$, which follows
  by Buchberger's criterion and the fact that the generators of $I$
  and $J$ involve disjoint sets of variables.  If we pick $u\in \cT
  X$, $v\in \cT Y$ regular points, then $(u,v)$ is a regular point in
  $\cT (X\times Y)$. Our goal is to prove the multiplicity formula.

  Given two primary decompositions $\init_u(I)=\bigcap_i
  M_i\subset\CC[\x]$, $\init_v(J)=\bigcap_j N_j\subset\CC[\y]$, we
  claim that $\init_{(u,v)}(I+J)=\bigcap_{i,j}
  (M_i+N_j)\subset \CC[\x,\y]$ is also a primary decomposition. 
The equality as sets follows immediately, so we only need to show that
$M_i+N_j\subset \CC[\x,\y]$ is a primary ideal. Let $P_i\subset
\CC[\x]$ and $Q_j\subset \CC[\y]$ be associate prime ideals to $M_i$
and $N_j$ respectively. Since $\CC$ is algebraically closed, and $M_i$
and $N_j$ involved disjoint sets of variables, it is immediate to
check that $P_i + Q_j\subset \CC[\x,\y]$ is a prime ideal. Namely,
the quotient ring $\CC[\x,\y]/(P_i+Q_j)$ equals
$(\CC[\x]/P_i)[\y]\otimes_{\CC} (\CC[\y]/Q_j)[\x]$, a tensor product of
two domains over $\CC$, hence also a domain.

Moreover, since both $M_i$ and $N_j$ involve disjoint sets of
variables, we have
\[
\Ann(M_i+N_j)=\Ann\,M_i\otimes_{\CC} \CC[\y] + \CC[\x]\otimes_{\CC}\Ann\,N_j.
\]
From this and the fact that $P_i^{s_i}\subset\Ann\, M_i\subset P_i$ and
$Q_j^{t_j}\subset \Ann\, N_j\subset Q_j$ for suitable $s_i, t_j\in \NN$,
we conclude $(P_i+Q_j)^{s_i+t_j}\subset \Ann (M_i+N_j)\subset P_i+Q_j$
thus proving by definition that $M_i+N_j$ is a $(P_i+Q_j)$-primary
ideal.

With similar arguments we conclude that all minimal
  primes of $\init_{(u,v)}(I+J)$ are sums of minimal primes of
  $\init_u(I)$ and $\init_v(J)$.  This follows because, given
  $P,P'\subset \CC[\x]$ and $Q,Q'\subset \CC[\y]$ prime ideals, it is
  straightforward to check that $P+Q\subset P'+Q'$ if and only if
  $P\subset P'$ and $Q\subset Q'$.

  Let $\sigma, \tau$ be maximal cones on $\cT(X)$ and $\cT(Y)$, and
  let $u, v$ be regular points in $\sigma$ and $\tau$
  respectively.  By definition of multiplicity of a maximal cone, we
  have
\[
m_{\sigma}=\!\!\!\!\!\!\!\!\!\sum_{\substack{P\in \Ass(\init_u(I))\\ P \textrm{
      minimal}}}\!\!\!\!\!\!\!\!\! m(P, \CC[\underline{x}]/\init_u I)
=
\!\!\!\!\!\!\!\!\!\sum_{\substack{P\in \Ass(\init_u(I))\\ P \textrm{ minimal}}}
\!\!\!\!\!\!\!\!\!\dim_{(\CC[\x]/P)_P}(\CC[\underline{x}]/\init_u
I)_P\;;
\]
\[
m_{\tau} =\!\!\!\!\!\!\!\!\!\!\!\!\! \sum_{\substack{Q\in
    \Ass(\init_v(J))\\ Q \textrm{ minimal}}} \!\!\!\!\!\!\!\!\!\!\!
\dim_{({\CC[\y]}/{Q})_Q} (\CC[\underline{y}]/\init_v J)_Q\; ;\;
m_{\sigma\times \tau} =\!\!\!\!\!\!\!\!\!\!\!\!\! \sum_{\substack{P\in \Ass(\init_u(I))\\
    Q\in \Ass(\init_v(J))\\P, Q \textrm{ minimal}}}\!\!
\!\!\!\!\!\!\!\!\!
\dim_{(\frac{\CC[\x,\y]}{P+Q})_{P+Q}}\!\!\big(\frac{\CC[\x,\y]}{\init_u
  I + \init_v J}\big)_{P+Q}.
\]
The statement $m_{\sigma\times \tau}=m_{\sigma}m_{\tau}$ follows from
the distributive law and Lemma~\ref{lm:prodLenght}.
\end{proof}
\begin{lemma}\label{lm:prodLenght}
Let $I\subset \CC[\x
]$, $J\subset \CC[\y
]$ be ideals
and let $P\subset \CC[\x
]$, $Q\subset \CC[\y
]$ be minimal
primes containing $I$ and $J$ respectively. Then
\[
\dim_{(\CC[\x,\y]/{P+Q})_{P+Q}}\left(\frac{\CC[\x,\y]}{I + J}\right)_{P+Q}=
\dim_{(\CC[\x]/{P})_{P}}(\CC[\x]/I)_P \cdot\dim_{(\CC[\y]/{Q})_{Q}}(\CC[\y]/J)_Q.
\]
\end{lemma}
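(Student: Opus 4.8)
The plan is to reduce the statement to a purely local computation about tensor products of Artinian local algebras. First I would localize: set $A = \CC[\x]$, $B = \CC[\y]$, so that $A \otimes_\CC B = \CC[\x,\y]$. Since $P$ is a minimal prime over $I$, the localization $(A/I)_P$ is an Artinian local $A_P$-algebra, and similarly $(B/J)_Q$ is Artinian local over $B_Q$. The length $\dim_{(A/P)_P}(A/I)_P$ is just the length of this Artinian module over itself (the residue field being $(A/P)_P = \Frac(A/P)$). So the right-hand side is a product of two finite lengths.

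Next I would identify the left-hand ring. Because $I$ and $J$ involve disjoint variable sets, $\CC[\x,\y]/(I+J) \cong (A/I) \otimes_\CC (B/J)$, and $P + Q$ is a prime of this ring (this was already checked in the proof of Theorem~\ref{pr:cartesianProdAndMult}). I want to compute the length of $\big((A/I)\otimes_\CC (B/J)\big)_{P+Q}$ over its residue field. The key observation is that localizing $(A/I)\otimes_\CC(B/J)$ at $P+Q$ is the same as first localizing each factor — i.e., $\big((A/I)\otimes_\CC(B/J)\big)_{P+Q} \cong (A/I)_P \otimes_\CC (B/J)_Q$ — since inverting elements outside $P$ on the left and outside $Q$ on the right together invert everything outside $P+Q$ (here one uses that $P+Q$, being prime with $(A/P)\otimes_\CC(B/Q)$ a domain, pulls back correctly). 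Thus I reduce to: if $R$ is an Artinian local $\CC$-algebra of length $\ell_R$ with residue field $k_R$, and $S$ likewise with length $\ell_S$ and residue field $k_S$, then $R \otimes_\CC S$ — after localizing at the prime corresponding to $k_R \otimes_\CC k_S$, or rather its unique relevant component — has length $\ell_R \cdot \ell_S$ over its residue field $\Frac(k_R \otimes_\CC k_S)$.

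For that final reduction I would argue by a double filtration. Take composition series $0 = R_0 \subset R_1 \subset \cdots \subset R_{\ell_R} = R$ with $R_i/R_{i-1} \cong k_R$, and similarly for $S$. Tensoring over $\CC$ (which is exact) gives a filtration of $R \otimes_\CC S$ whose graded pieces are all isomorphic to $k_R \otimes_\CC k_S$; localizing at $P+Q$ preserves exactness, so the localized module has a filtration of length $\ell_R \cdot \ell_S$ with each graded piece $\cong (k_R \otimes_\CC k_S)_{P+Q} = \Frac(k_R \otimes_\CC k_S)$, which is exactly the residue field. Counting gives the length product. The main obstacle, and the step that needs the most care, is verifying that localization commutes with the tensor product decomposition as claimed — in particular that $(k_R \otimes_\CC k_S)$ localizes to a field rather than a larger Artinian ring. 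This works because $k_R$ and $k_S$ are finitely generated field extensions of $\CC$ and $\CC$ is algebraically closed, so $k_R \otimes_\CC k_S$ is a domain (as in the prime-ideal argument in the proof of Theorem~\ref{pr:cartesianProdAndMult}), and localizing a domain at its zero ideal yields its fraction field; everything outside $P+Q$ maps to a nonzerodivisor, so no spurious components appear.
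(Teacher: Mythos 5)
Your route is genuinely different from the paper's: you argue by composition series and additivity of length (tensor a composition series of $R=(\CC[\x]/I)_P$ with one of $(\CC[\y]/J)_Q$ over $\CC$, localize, and count graded pieces), whereas the paper fixes coefficient fields $F=(\CC[\x]/P)_P\subset M=(\CC[\x]/I)_P$ and $G\subset N$, proves by an explicit linear-independence computation that $M\otimes_\CC N$ is a \emph{free} $F\otimes_\CC G$-module of rank $sr$, and then base changes along $F\otimes_\CC G\hookrightarrow L=(\CC[\x,\y]/(P+Q))_{P+Q}$. Your filtration strategy is a viable, arguably more standard, alternative and avoids choosing coefficient fields.

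However, one step is false as stated: the claimed isomorphism $\bigl((\CC[\x]/I)\otimes_\CC(\CC[\y]/J)\bigr)_{P+Q}\cong(\CC[\x]/I)_P\otimes_\CC(\CC[\y]/J)_Q$, justified by saying that inverting elements outside $P$ on one side and outside $Q$ on the other inverts everything outside $P+Q$. Take $I=J=0$ and $P=Q=(0)$ with a single variable on each side: the left-hand side is the field $\CC(x,y)$, while the right-hand side is $\CC(x)\otimes_\CC\CC(y)$, which is not a field --- the class of $x+y\notin P+Q$, namely $x\otimes 1+1\otimes y$, is not invertible there. In general $(\CC[\x]/I)_P\otimes_\CC(\CC[\y]/J)_Q$ is only the \emph{partial} localization at the multiplicative set $S=(\CC[\x]\smallsetminus P)(\CC[\y]\smallsetminus Q)$ used in the paper, and localizing at $P+Q$ inverts strictly more. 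The repair is easy, and your own handling of the graded pieces already contains it: since $S\cap(P+Q)=\emptyset$, localization at $P+Q$ factors through $S^{-1}$, so you may filter $(\CC[\x]/I)_P\otimes_\CC(\CC[\y]/J)_Q$ by the tensored composition series and then localize that filtration further at the prime lying over $P+Q$. Each graded piece $F\otimes_\CC G$ is a domain on which every element outside $P+Q$ acts injectively, so it localizes to $\Frac(F\otimes_\CC G)=L$, a nonzero simple module, and additivity of length gives exactly $s\cdot r$ --- which is the count you wrote down. With that intermediate isomorphism corrected (or simply deleted in favor of the factorization through $S^{-1}$), your proof is correct.
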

\begin{proof}
  Consider the residue fields $F=(\CC[\x]/P)_P$, $G=(\CC[\y]/Q)_Q$,
  and $L= (\CC[\x,\y]/(P+Q))_{P+Q}$. Note that $F\otimes_\CC G
  \hookrightarrow L$ via the natural inclusion given by the
  multiplication map, since $\CC$ is algebraically closed.
  Likewise, one can easily show that $\CC[\x]/I\otimes_{\CC}
  \CC[\y]/J\cong \CC[\x,\y]/(I+J)$ via the multiplication map. We
  wish to find a similar result for the localization of these
  quotients at the corresponding minimal primes. 

  For simplicity, call $M=(\CC[\x]/I)_P\cong F^s$ and
  $N=(\CC[\y]/J)_Q\cong G^r$ the corresponding finite dimensional
  vector spaces. Our goal is to prove that $M\otimes_{\CC} N$ is a
  free $L$-vector space of rank $sr$. From the canonical isomorphisms
  $\CC[\x]_P\otimes_{\CC[\x]} \CC[\x]/I\cong (\CC[\x]/I)_P$,
  $\CC[\y]_Q\otimes_{\CC[\y]} \CC[\y]/J\cong (\CC[\x]/J)_Q$, we see
that  $M\otimes_\CC N=(\CC[\x]/I)_P\otimes_\CC (\CC[\y]/J)_Q \cong
  \CC[\x,\y]/(I+J)[S^{-1}]$, where $S=(\CC[\x]\smallsetminus
  P)(\CC[\y]\smallsetminus Q)$ is the multiplicatively closed set
  consisting of products of polynomials, each of which is pure in each
  set of variables, and which do not lie inside the
  prime ideals $P$ or $Q$.
Similarly, $F\otimes_{\CC} G \cong \CC[\x,\y]/(P+Q)[S^{-1}]$.  

On the other hand, notice that $M\otimes_\CC N$ comes with a natural
$F\otimes_\CC G$-module structure via ``coordinatewise action.''
Hence,
\[
(\CC[\x,\y]/(I+J))_{(P+Q)} \cong 
L\otimes_{(F\otimes_\CC G)} (M\otimes_\CC N).
\]
From the last isomorphism we see that to prove our lemma it suffices
to show that $M\otimes_{\CC} N$ is a free $F\otimes_\CC G$-module of
rank $sr$. The original claim will follow after tensoring with $L$.

Let $\{f_i\}$, $\{g_j\}$ be bases of $M$ and $N$ respectively. We claim that
$\{f_i\otimes g_j\}$ is a basis of $M\otimes_{\CC} N$ as an
$F\otimes_{\CC} G$-module. It suffices to check the linear
independence. We proceed in an elementary way, by successively using
the linear independence of the different bases of the free modules $M,
N, F$ and $G$. Suppose $\sum_{i,j} a_{ij} f_i\otimes g_j =0 \in
M\otimes_{\CC} N$, with $a_{ij}\in F\otimes_{\CC}G$. Write
$a_{ij}=\sum_{k,l} a_{ijkl} u_k\otimes v_l$ where $a_{ijkl}\in \CC$
and $u_k, v_l$ are basis elements of the field extensions $F|\CC$, $G|
\CC$ respectively. Thus,
\begin{equation}
0 = \sum_{i,j} a_{ij} f_i\otimes
g_j = \sum_{j,l}\big(\sum_{i,k} a_{ijkl} u_kf_i\big)\otimes_{\CC} (v_l g_j).\label{eq:4}
\end{equation}
To prove $a_{ij}=0$ it suffices to show $a_{ijkl}=0$ for all
$i,j,k,l$.  By a well-know result on tensor algebras (cf.~\cite[Lemma
6.4]{CommAlg}), expression~\eqref{eq:4} implies the existence of
elements $a_{jlt} \in \CC$, $h_t\in M$ such that $\sum_t a_{jlt} h_t =
\sum_{i,k}a_{ijkl}u_kf_i$ for all $j,l$ and $\sum_{j,l} a_{jlt}
v_lg_j=0$ for all $t$.  Hence, rearranging the sum we conclude that
$\sum_{j}(\sum_{l} a_{jlt}v_l)g_j=0$ in $N$ for all $t$, which
implies $\sum_{l} a_{jlt}v_l=0 \in G$ for all $j,t$. This in turn
implies $ a_{jlt}=0$ for all $j,l,t$.

Using the condition $\sum_{i}(\sum_k a_{ijkl}u_k)f_i= \sum_t a_{jlt}
h_t=0$, we have $\sum_{k} a_{ijkl} u_k = 0$ for all $i,j,l$.
Therefore, $a_{ijkl}=0$ for all $i,j,k,l$, as we wanted to show.
\end{proof}

\begin{corollary}\label{cor:key}
  Given $X,Y\subset \pr^n$ two projective irreducible varieties none
  of which is contained in a proper coordinate hyperplane, we can
  consider the associated irreducible projective variety $X \centerdot Y\subset
  \pr^n$. Then as \emph{sets}:
\[
\TropC(X \centerdot Y) = \TropC(X) + \TropC(Y),
\]
where the sum on the right-hand side denotes the Minkowski sum in
$\RR^{n+1}$.
\end{corollary}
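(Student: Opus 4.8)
The plan is to realize the Hadamard product $X \centerdot Y$ as a monomial image of the ordinary product $X \times Y$, and then apply the results already assembled in this section. Concretely, pick affine representatives: view $X, Y \subset \CC^{n+1}$ as the affine cones, and intersect with the torus to get $\widehat X = X \cap (\CC^*)^{n+1}$ and $\widehat Y = Y \cap (\CC^*)^{n+1}$ (this is where the hypothesis that neither variety lies in a coordinate hyperplane is used: these torus parts are dense in the respective cones, so their tropicalizations compute $\TropC(X)$ and $\TropC(Y)$). The coordinatewise multiplication map $\mu\colon (\CC^*)^{n+1} \times (\CC^*)^{n+1} \to (\CC^*)^{n+1}$, $(x,y) \mapsto x \centerdot y$, is exactly the monomial map with associated integer matrix $A = (I_{n+1} \mid I_{n+1}) \in \ZZ^{(n+1) \times 2(n+1)}$, and by construction $\mu(\widehat X \times \widehat Y)$ is dense in the affine cone over $X \centerdot Y$, hence $\TropC(X \centerdot Y) = \TropC\big(\mu(\widehat X \times \widehat Y)\big)$.

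Now I would chain three facts. First, by Theorem~\ref{pr:cartesianProdAndMult}, $\TropC(\widehat X \times \widehat Y) = \TropC(\widehat X) \times \TropC(\widehat Y) = \TropC(X) \times \TropC(Y) \subset \RR^{2(n+1)}$ as sets. Second, by Theorem~\ref{thm:ST} (only the set-level statement $\TropC(\alpha(V)) = A(\TropC(V))$ is needed, and its hypotheses on generic finiteness and multiplicities are irrelevant here), we get
\[
\TropC(X \centerdot Y) \;=\; A\big(\TropC(X) \times \TropC(Y)\big).
\]
Third, the linear map given by left multiplication by $A = (I_{n+1} \mid I_{n+1})$ sends a pair $(u,v)$ to $u + v$, so $A\big(\TropC(X) \times \TropC(Y)\big)$ is precisely the set $\{u + v : u \in \TropC(X),\, v \in \TropC(Y)\}$, which is the Minkowski sum $\TropC(X) + \TropC(Y)$ in $\RR^{n+1}$. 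Combining the three equalities gives the claim.

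The one point that needs genuine care — and which I expect to be the main obstacle — is the passage between the projective varieties $X, Y \subset \pr^n$ in the statement and the affine/toric pictures to which Theorems~\ref{thm:ST} and~\ref{pr:cartesianProdAndMult} apply. I would spell out that, for a projective variety $X \subset \pr^n$ not contained in any coordinate hyperplane, its tropicalization (as defined earlier via the homogeneous ideal in $\CC[x_0,\dots,x_n]$) coincides with the tropicalization of the dense torus orbit $X \cap (\CC^*)^{n+1}$ inside the affine cone, so that "going back and forth between the two definitions" (as the Definition permits) is legitimate. One should also note that irreducibility of $X \centerdot Y$ is automatic: it is the closure of the image of the irreducible variety $\widehat X \times \widehat Y$ under a morphism. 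Since the corollary only asserts an equality of \emph{sets} (not of weighted fans), the multiplicity bookkeeping of Theorems~\ref{thm:ST} and~\ref{pr:cartesianProdAndMult} can be discarded entirely, which keeps the argument short.
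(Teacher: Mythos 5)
Your proof is correct, and it is essentially the argument the paper has in mind: the section sets up exactly this chain, describing the Hadamard square as the image of the monomial map with exponent matrix $(I_n\mid I_n)$ and then providing Theorem~\ref{thm:ST} (set-theoretic pushforward) and Theorem~\ref{pr:cartesianProdAndMult} (tropicalization of a product) as the ingredients. The only difference is cosmetic: the paper does not write out a proof at all, but instead remarks after the corollary that the set-theoretic statement is a direct consequence of Kapranov's theorem together with the fact that valuations turn coordinatewise products into sums, i.e.\ a pointwise lifting argument over a valued field; your version instead assembles the same content from the in-paper theorems, which has the advantage of being self-contained and of making explicit the bookkeeping you rightly flag --- passing from the projective varieties to the torus parts of their affine cones (using the hypothesis that neither lies in a coordinate hyperplane), observing that $\mu(\widehat X\times\widehat Y)$ is dense in the cone over $X\centerdot Y$, and noting that only the set-level part of Theorem~\ref{thm:ST} is needed, so no finiteness or multiplicity hypotheses enter. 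Both routes are sound; yours trades the one-line appeal to Kapranov for a short explicit verification within the framework the section already builds.
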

As one can easily imagine, this set-theoretic result is motivated by
(and is a direct consequence of) Kapranov's theorem~\cite[Theorem
2.2.5]{EKL} (i.e., the fundamental theorem of tropical geometry) and
the fact that valuations turn products into sums (see \cite[Theorem
2.3]{ElimTheory} for the precise statement). The novelty of our
approach is that under suitable finiteness condition of the monomial
map defining Hadamard products, we can effectively compute
multiplicities of regular points in $\cT(X\centerdot Y)$ from
multiplicities of $\TropC(X)$ and $\TropC(Y)$.  
It is important to mention that this finiteness condition holds for
the example we are studying in this paper.
Moreover, we are \textbf{not} claiming that $\cT (X\centerdot Y)$
inherits a fan structure from $\cT(X)$ and $\cT(Y)$. In general, it might happen that maximal
cones in the Minkowski sum get subdivided to give maximal cones in
$\TropC(X\centerdot Y)$ or, moreover, the union of several cones in
the Minkowski sum gives a maximal cone in $\TropC(X\centerdot Y)$.

\begin{example}
It may seem surprising at first that the combinatorial structure (e.g.\ $f$-vector) of the Newton polytope does not follow easily from the description of the tropical hypersurface as a Minkowski sum of two fans.  Moreover, the number of edges of the polytope (and even the number of vertices) may exceed the number of maximal cones of the tropical hypersurface {\em given as a set}.
To see this in a small example, consider the tropical curve in $\RR^3$ whose six rays are columns of the following matrix

\[
\left(
\begin{array}{cccccc}
1 & 1  & 1& 1  & 1 &-5  \\
0 &  0 & 1& 1 &  2&  -4\\
0 &  1 &  0& 2 &  1 & -4
\end{array}
\right),
\]
and consider the Minkowski sum of the fan with itself.  This tropical
hypersurface is described as a union of 15 cones (or as a non-planar
graph in $\Sn^2$ with 6 nodes and 15 edges), but the dual Newton
polytope has 16 vertices, 25 edges, and 11 facets.  If we intersect
the tropical hypersurface with a sphere around the origin, we would
see the planar graph in
Figure~\ref{fig:TropicalSurfaceWithNOFanStructure}.

\begin{figure}[htb]
  \centering
  \includegraphics[scale=0.5]{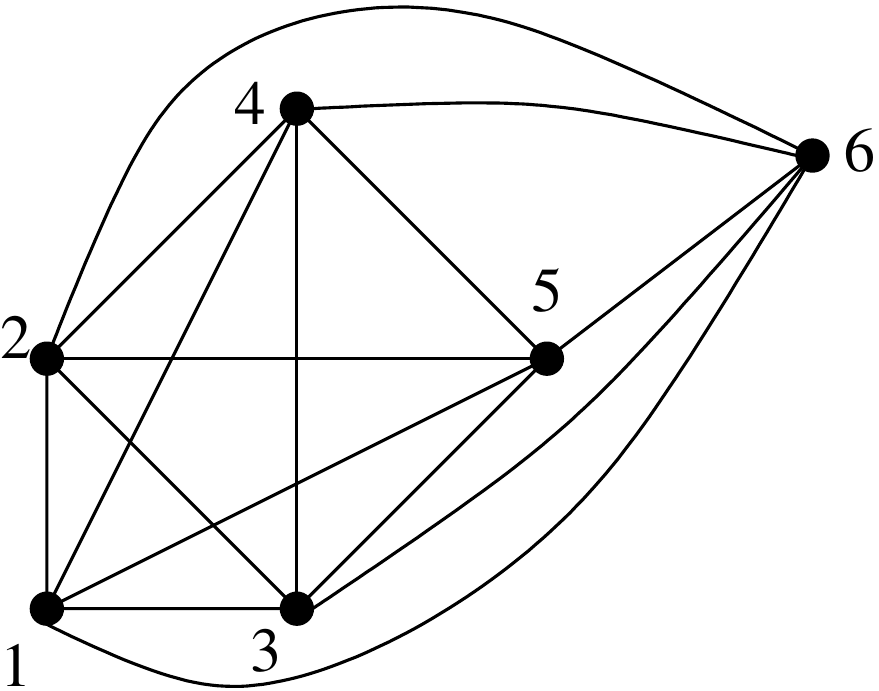}
  \caption{A tropical surface in $\RR^3$ described as a collection of
    2-dimensional cones in $\RR^3$ or as a non-planar graph in $\Sn^2$.}
\label{fig:TropicalSurfaceWithNOFanStructure}
\end{figure}

The planer regions correspond to the 16 vertices.  The black dots
correspond to the columns in the above matrix and the arcs between
them correspond to cones generated by them. The nodes in the graph
correspond to the facets in the Newton polytope. Six of these facets
correspond to the black dots in
Figure~\ref{fig:TropicalSurfaceWithNOFanStructure} and are the 6 nodes
in the non-planar graph description of the tropical hypersurface.  The
remaining five facets correspond to the missing intersection points
between the edges of the non-planar graph in the picture. Adding these
5 nodes to the graph will give us a planar graph with 11 nodes and 25
edges that encodes the fan structure of the tropical variety and the
combinatorics of the Newton polytope.  \smallskip

If we had started instead with a tropical curve whose six rays are $\pm e_i$ for $i=1,2,3$, then the dual polytope would be a cube with $f$-vector $(8, 12, 6)$.
\qed
\end{example}

Due to the lack of a fan structure in our description of $X \centerdot
Y$, Corollary~\ref{cor:key} gives no estimate for the number of
maximal cones in the tropical variety $X\centerdot Y$, where the fan
structure is inherited from the Gr\"obner fan structure of the
defining ideal of $X\centerdot Y$. Moreover, this fan structure is
infeasible to obtain in general. Hence, in the hypersurface case we
have no estimate on the number of edges of the dual polytope to the
tropical variety $\cT (X\centerdot Y)$ and, as a consequence, no
estimate on the number of vertices of the polytope. As the previous
example illustrates, the description of $\cT(X\centerdot Y)$ as a
collection of weighted cones of maximal dimension contains less
combinatorial information than the fan structure does and hence, the
computation of the dual polytope becomes more challenging, as we
show in Section~\ref{sec:ComputeNP}.

\medskip

We now describe the computation of the tropical variety $\cT(\cM)$ of
our model $\cM$.  By our discussions in Section~\ref{sec:theModel}, we
know that the defining ideal of $X = \Sec(\pr^1\times \pr^1\times
\pr^1 \times \pr^1)\subset \pr^{15}$ is generated by the $3 \times 3$
minors of the three flattenings of $2 \times 2 \times 2 \times 2$
matrix of variables $(p_{ijkl})$, for a total of 48 generators. Since
$X$ is irreducible, we can use \gfan\ \cite{gfan} to compute the
tropical variety $\cT(X)$.

The ideal $I(X)$ of $\CC[p_{0000}, \ldots, p_{1111}]$ is invariant
under the action of $B_4$, and \texttt{gfan} can exploit the symmetry
of a variety determined by an action of a subgroup of the symmetric
group $\Sn_{16}$. For this, we need to provide a set of generators as
part of the input data.  The output groups cones together according to
their orbits.

The tropical variety $\cT(X) \in \RR^{16}$ has a lineality space
spanned by the rows of the following integer matrix:
\begin{equation}
\label{eqn:lineality}
\setstretch{0.65}
\Lambda = 
\left(
  \begin{array}{cccccccccccccccc}    1  & 1  & 1  & 1  & 1  & 1  &1   & 1  & 1  & 1  & 1  & 1  & 1  & 1  & 1  & 1  \\   0  & 0  & 0  &0   & 0  &  0 &  0 & 0  &  1 &  1 & 1  & 1  &  1 &  1 & 1  & 1  \\   0  & 0  & 0  & 0  & 1  & 1  &  1 & 1  & 0  & 0  &  0 & 0  &  1 & 1  & 1  & 1  \\    0  & 0  &  1 & 1  & 0  & 0  & 1  & 1  & 0  & 0  & 1  & 1  & 0  & 0  & 1  & 1  \\    0  & 1  &  0 &   1&  0 &   1& 0  &1   &  0 &  1 &   0&  1 &  0 &   1&  0 &  1  \end{array}
\right),
\end{equation}
where the columns correspond to variables $p_{ijkl}$, for $i,j,k,l \in
\{0,1\}$, ordered lexicographically. As we explained already in this
section, we can identify this linear space with the maximal torus
acting on the variety $X$ and hence on $X\centerdot X$. A set of
generators of the corresponding lattice giving this action can be read-off from
the parameterization. More precisely, consider the morphism of tori
$\beta\colon (\CC^*)^5 \to (\CC^*)^{16}$ sending $(t_0, \ldots, t_4)\mapsto
(t^{m_1}, \ldots, t^{m_{16}})$, where each $m_i$ is of the form
$(1,v_i)$, where $v_i$ runs over all sixteen vertices of the
4-cube. Then, one can check that the closure of the image of $\beta$
in $\CC^{16}$ is the affine cone over the Segre embedding $\pr^1\times
\pr^1\times \pr^1\times \pr^1 \hookrightarrow \pr^{15}$. More precisely, given
a generic point in the image of $\beta$, we have $t_0t_1^it_2^jt_3^kt_4^l =
\lambda x_iy_jz_kw_l$, where $(x_0:x_1)=(1:t_1), (y_0:y_1)=(1:t_2),
(z_0:z_1)=(1:t_3), (w_0:w_1)=(1:t_4)\in \pr^1$ and $\lambda=t_0 \in
\RR$.

The {\tt gfan} computation confirms that the tropical variety $\cT(X)$
inside $\RR^{16}$ is a $10$-dimensional polyhedral fan with a
$5$-dimensional lineality space.  After moding out by the lineality
space, the $f$-vector is: $$(382, 3436, 11236, 15640, 7680).$$
Regarding the orbit structure, there are 13 rays and 49 maximal cones
in $\cT(X)$ up to symmetry and all maximal cones have multiplicity 1.

According to Corollary~\ref{cor:key}, the tropical variety of the model
is (as a set)
$$\cT(\cM) = \cT(X \centerdot X) = \cT(X) + \cT(X).$$
Since we know that this will result in a pure polyhedral fan, we only
need to compute all Minkowski sums between pairs of cones of
\emph{maximal} dimension.  For this step we use the $B_4$ group
action.  There is a natural (coordinatewise) action of $B_4 \times
B_4$ on $\TropC(X) \times \TropC(X)$ that translates to a $B_4$-action
on $\TropC(X) + \TropC (X)$. Therefore, to compute the Minkowski sum
of maximal cones, we first consider $49 \cdot $7\,680$ = $ 376\,320
pairs $(\sigma_1, \sigma_2)$, where $\sigma_1$ is taken from a set of
representatives of the $49$ orbits of maximal cones, and $\sigma_2$ is
taken from the set of all maximal cones.  We discard the pairs
$(\sigma_1, \sigma_2)$ for which $\sigma_1 + \sigma_2$ is not of
maximal dimension 15.  After this reduction, the total number of
maximal cones computed is 92\,469.  By construction, this list of
92\,469 cones contains all representatives of the orbits of maximal
cones in $\TropC (X \centerdot X)$.  But they do not form distinct
orbits.  Some cones appear twice in the list as $\sigma + \tau$ and
$\tau + \sigma$, and this the only possibility except for 4\,512 cones
which arise from two different pairs, plus their flips. That is,
$\sigma_1 +\tau_1 = \sigma_2 + \tau_2$ where both pairs differ only by
an interchange of a \emph{single} pair of extremal rays $(r_1,r_2) \in
(\sigma_1, \tau_1)$: i.e.  $\sigma_2=(\sigma_1\smallsetminus \{r_1\})
\cup \{r_2\}$ and $\tau_2=(\tau_1\smallsetminus \{r_2\}) \cup
\{r_1\}$.  Some cones $\sigma$ have non-trivial stabilizers in $B_4$,
so there are cones $\sigma + \tau_1$ and $\sigma + \tau_2$ in the same
orbit. The dimension of the maximal cones in $\cT(\cM)$ confirms that
$\cM$ is a hypersurface.

The total number of orbits of maximal cones is 18\,972, and each orbit
has size 96, 192, or 384.  We then let the group $B_4$ act on each
orbit and obtain 6\,865\,824 cones of dimension 15, the union of which
is the tropical variety $\cT(\cM)$, as predicted by
Corollary~\ref{cor:key}.  We do not have a fan structure of
$\cT(\cM)$.  Nonetheless, we can compute the multiplicity of
$\cT(\cM)$ at any regular point using Theorem \ref{thm:STGeneralized}
because our matrix $A$ is of the form $(I_{16} \mid I_{16})\in
\ZZ^{16\times 32}$.  After taking quotients by the respective maximal
torus acting on each space, the map $X' \times X' \rightarrow X'
\centerdot X'$, is generically finite of degree two.  In practice, the
lattice indices in~\eqref{eq:3} are computed via greatest common
divisors (gcd) of maximal minors of integer matrices whose rows span
the cones in $\cT(X)$ and $\cT(X\times X)$.  More precisely,
\begin{lemma}\label{lm:indexCalculation}
  Given a lattice $D\subset \ZZ^r$, and an integer matrix $A\subset
  \ZZ^{d\times r}$ with $\rk(A(D))=\rk(D)$, the lattice index $\text{index}( \RR\otimes_{\ZZ}A
  (D) \, \cap \ZZ^d : A(D))$ can be computed as follows. Pick $\{w_1,
  \ldots w_s\}$ a minimal system of generators of $D$ over $\ZZ$, and let $B:=(w_1\mid
  \ldots \mid w_s)\in \ZZ^{r\times s}$. Then, the index equals the
  quotient of  the gcd of the maximal minors of the matrix $A\cdot B\in
  \ZZ^{r\times s}$ by the gcd of the
  maximal minors of the matrix $B$.
\end{lemma}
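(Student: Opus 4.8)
The plan is to reduce the lattice-index computation to a standard fact from the theory of Smith normal form (or, equivalently, from the interpretation of gcd's of maximal minors as ideals of minors). First I would fix the setup: since $\{w_1,\dots,w_s\}$ is a \emph{minimal} generating set of the lattice $D\subset\ZZ^r$, the matrix $B=(w_1\mid\cdots\mid w_s)\in\ZZ^{r\times s}$ has rank $s$ (i.e.\ $D$ is free of rank $s$ and $B$ is injective), and by hypothesis $\rk(A(D))=\rk(D)=s$, so $A\cdot B\in\ZZ^{d\times s}$ also has rank $s$. (I note the statement has a typo: $A\cdot B$ lies in $\ZZ^{d\times s}$, not $\ZZ^{r\times s}$.) The key classical input I would invoke is: for an injective integer matrix $C\in\ZZ^{k\times s}$ of rank $s$, the index of the sublattice $C(\ZZ^s)$ inside its saturation $(\RR\otimes C(\ZZ^s))\cap\ZZ^k$ equals $\gcd$ of the $s\times s$ minors of $C$; equivalently, if $d_1\mid d_2\mid\cdots\mid d_s$ are the elementary divisors of $C$, then $\gcd$ of the maximal minors is $d_1 d_2\cdots d_s$, while the image $C(\ZZ^s)$ sits inside its saturation with index exactly $d_1\cdots d_s$.

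Next I would apply this twice. Applied to $C=B$: the saturation of $B(\ZZ^s)=D$ in $\ZZ^r$ is $D$ itself (because $D$ is primitive only if $\gcd$ of the maximal minors of $B$ equals $1$ — but in general $D$ need not be primitive, so I must keep the factor $\gcd(\text{maximal minors of }B)$, which is the index $[\,(\RR\otimes D)\cap\ZZ^r : D\,]$). Applied to $C=A\cdot B$: we get $[\,(\RR\otimes A(D))\cap\ZZ^d : A(D)\,]=\gcd(\text{maximal minors of }A\cdot B)$. Now the quantity we want is $\text{index}\big((\RR\otimes A(D))\cap\ZZ^d : A(D)\big)$, which is \emph{exactly} the second of these — so it would seem the $B$-factor is spurious. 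The resolution, which I would spell out, is that the lemma is meant to be applied in the situation where $D$ itself \emph{is} primitive in $\ZZ^r$ (as is the case for the spans $\mathbb{L}_v\cap\ZZ^r$ of cones of the tropical variety, which are saturated by construction), in which case $\gcd(\text{maximal minors of }B)=1$ and the formula reduces to the clean statement; including the denominator makes the formula valid for an arbitrary choice of generating set or an arbitrary lattice $D$, since replacing $D$ by a finite-index superlattice $D^{\sat}$ multiplies $\gcd(\text{max minors of }A\cdot B)$ by $[D^{\sat}:D]$ and $A(D^{\sat})$ vs $A(D)$ differ by the same index (using $\rk A(D)=\rk D$, so $A$ is injective on $\RR\otimes D$).

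So the proof sketch runs: (1) record ranks and note $B$, $AB$ are injective of rank $s$; (2) invoke the Smith-normal-form identity $\gcd(s\times s\text{ minors of }C)=d_1\cdots d_s=[\,\mathrm{sat}(C\ZZ^s):C\ZZ^s\,]$ for injective $C$ of full column rank; (3) multiplicativity of the index in the tower $A(D)\subseteq A(D^{\sat})\subseteq (\RR\otimes A(D))\cap\ZZ^d$ together with $[A(D^{\sat}):A(D)]=[D^{\sat}:D]$ (valid since $A$ is injective on the relevant span) and $[(\RR\otimes A(D))\cap\ZZ^d : A(D^{\sat})]=\gcd(\text{max minors of }A\cdot B^{\sat})$ where $B^{\sat}$ is a basis matrix for $D^{\sat}$; (4) relate $\gcd(\text{max minors of }A B)=[D^{\sat}:D]\cdot\gcd(\text{max minors of }AB^{\sat})$ by the Cauchy–Binet expansion $\det((AB)_{I}) = \sum$ over $s\times s$ minors of the change-of-basis matrix expressing $B$ in terms of $B^{\sat}$, whose $\gcd$ is $[D^{\sat}:D]$; and similarly $\gcd(\text{max minors of }B)=[D^{\sat}:D]$. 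Dividing, the factors $[D^{\sat}:D]$ cancel and leave the claimed formula. \textbf{The main obstacle} is bookkeeping the non-primitivity of $D$ cleanly: one must be careful that $A$ restricted to $\RR\otimes D$ is injective (guaranteed by $\rk A(D)=\rk D$) so that indices transport faithfully under $A$, and one must correctly identify that the denominator $\gcd(\text{maximal minors of }B)$ is precisely the index $[(\RR\otimes D)\cap\ZZ^r:D]$ via the same Cauchy–Binet/Smith-normal-form argument — everything else is a routine application of the structure theorem for finitely generated abelian groups.
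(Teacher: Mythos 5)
Your proposal is correct and takes essentially the same route as the paper: factor the index through the tower $A(D)\subseteq A\bigl((\RR\otimes_{\ZZ}D)\cap\ZZ^r\bigr)\subseteq(\RR\otimes_{\ZZ}A(D))\cap\ZZ^d$, use the rank hypothesis to get injectivity of $A$ on $\RR\otimes_{\ZZ}D$ so that $[A(D^{\sat}):A(D)]=[D^{\sat}:D]=\gcd$ of the maximal minors of $B$, and invoke the Smith-normal-form identity for the gcd of maximal minors of $A\cdot B$ (your extra Cauchy--Binet detour through a basis matrix of $D^{\sat}$ is just equivalent bookkeeping). Your observation that the quotient formula actually computes the index of $(\RR\otimes_{\ZZ}A(D))\cap\ZZ^d$ over $A(D^{\sat})$ rather than over $A(D)$ — so the stated left-hand side is only literally accurate when $D$ is saturated — is also implicit in the paper's own proof and is exactly the quantity needed in Theorem~\ref{thm:STGeneralized}.
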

\begin{proof}
  Since $\RR\otimes_{\ZZ}A(D) = \RR \otimes_{\ZZ}
  A(\RR\otimes_{\ZZ} D\, \cap \ZZ^r)$, the
$
\text{index}( \RR\otimes_{\ZZ}A (D) \, \cap \ZZ^d : A(D))$ equals the product\[
\text{index}(
\RR\otimes_{\ZZ} A (D) \cap \ZZ^d : A (\RR\otimes_{\ZZ} D \cap \ZZ^r))
\cdot \text{index} (A(\RR\otimes_{\ZZ} D \cap \ZZ^r) : A(D)).
\]
By construction $\text{index}( \RR\otimes_{\ZZ}A (D) \, \cap \ZZ^d :
A(D))$ is  the gcd of the maximal minors
of the matrix $A\cdot B$. To prove the result, it suffices to show that 
$\text{index} (A(\RR\otimes_{\ZZ} D \cap \ZZ^r) : A(D))$ equals the
gcd of the maximal minors of the matrix $B$ in the statement.

Since $\rk(A(D))=\rk(D)$, this implies that $\ker A\cap
D=\ker A\cap (\RR\otimes_{\ZZ} D\cap \ZZ^r)=\{\underline{0}\}$. Then:
 $A(\RR\otimes_{\ZZ} D \cap \ZZ^r)/A(D) \cong( \RR\otimes_{\ZZ}
D \cap \ZZ^r) /D$, which equals the gcd of the maximal minors of the
matrix $B$, as we wanted to show.
\end{proof}

In our case, $B$ is  spanned by twenty integer vectors (five from each cone
$\sigma\times {\bf 0} , {\bf 0 }\times \tau \in \cT X\times \cT X$ plus the lattices $\Lambda\times {\bf 0}, {\bf
  0} \times \Lambda$ coming from the
lineality space. Call $C_{\sigma}$ and $C_{\tau}$ each list of five
vectors of $\sigma$ and $\tau$. Then, the matrix $B$ in the previous lemma equals the block
diagonal matrix $B=$diag$(B_\sigma, B_\tau)$, where
$B_\sigma=(C_{\sigma}|\Lambda)$, $B_\tau=(C_{\tau}|\Lambda)$ and
$A\cdot B = (C_{\sigma} | \Lambda|C_{\tau} |\Lambda)$.  Thus, the index
equal the quotient of $\gcd(15\times 15$-minors of
$(C_{\sigma}|C_{\tau}|\Lambda) $ by the product
$\gcd \big(10\times 10$-minors of $(C_{\sigma}|\Lambda)\big) \cdot \gcd
\big(10\times 10$-minors of $(C_{\tau}|\Lambda) \big)$.  Each gcd calculation
is done via the Hermite (alt.\ Smith) normal form of these matrices
\cite{Maple}.  After computing all multiplicities we obtain only
values one or two.

\section{Newton polytope of the defining equation}\label{sec:ComputeNP}

In this section, we focus our attention on the \emph{inverse
  problem}. That is, given the tropical fan of an irreducible
hypersurface, we wish to computing the \emph{Newton polytope} of the
defining equation $f=\sum_a c_{a}\underline{x}^a$ of the hypersurface, i.e.\ the convex
hull of all vectors $a \in \ZZ^{16}$ such that $\underline{x}^a$ appears
with a nonzero coefficient in $f$.  

\subsection{Vertices and Facets}

We will first present the results of our computation before discussing
algorithms and implementation in the following subsections.  Here is the ultimate result:

\begin{theorem}
\label{thm:polytope}
The Newton polytope of the defining equation of $\cM$ has 17\,214\,912 
vertices in 44\,938 orbits and 70\,646 facets in 246 orbits under the symmetry group $B_4$.
\end{theorem}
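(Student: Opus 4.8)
The plan is to reduce the computation of the Newton polytope $P$ of the defining equation $f$ of $\cM$ to a sequence of computations in tropical geometry, using the standard duality between the tropical hypersurface $\cT(\cM)$ and the normal fan of $P$. Concretely, since $f$ is a polynomial in 16 variables whose zero set is the hypersurface $\cM$, the tropical hypersurface $\cT(\cM)$ (viewed as a subset of $\RR^{16}$, modulo the lineality space) is exactly the codimension-one skeleton of the normal fan of $P$; a vector $w$ lies in the relative interior of a maximal cone of $\cT(\cM)$ if and only if $\init_w(f)$ is a nonconstant binomial (up to the grading), and the dual edge of $P$ points in the primitive direction prescribed by $w$. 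Thus each maximal cone of $\cT(\cM)$ corresponds to an edge of $P$, and the full face lattice of $P$ can in principle be recovered from the incidence structure of $\cT(\cM)$. By Corollary~\ref{cor:key} and the explicit computation in Section~\ref{sec:tropicalLand}, we have $\cT(\cM) = \cT(X) + \cT(X)$ as a set of $6\,865\,824$ maximal cones (in $18\,972$ $B_4$-orbits), together with all their multiplicities (all equal to 1 or 2) computed via Theorem~\ref{thm:STGeneralized} and Lemma~\ref{lm:indexCalculation}. This data is the input to the polytope reconstruction.

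The strategy for actually producing the vertices and facets, which is carried out by Algorithms~\ref{alg:ray}, \ref{alg:walk}, and~\ref{alg:certifyFacets} and explained in the subsequent subsections, proceeds in the following order. First I would fix the natural $5$-dimensional grading coming from the lineality lattice $\Lambda$ in~\eqref{eqn:lineality}, so that $P$ lives (modulo translation) in a $10$-dimensional affine subspace of $\RR^{16}$, and compute the multidegree of $f$ from a single vertex of $P$ — this pins down the ambient position and scale of $P$. Next, to enumerate vertices I would use a ray-shooting procedure (Algorithm~\ref{alg:ray}): pick a generic linear functional, optimize it over $P$ by repeatedly replacing the current candidate vertex with a neighbor along an improving edge whose direction is read off from the tropical fan $\cT(\cM)$; each edge direction is a maximal cone of $\cT(\cM)$, and each step one solves a small local linear-algebra / linear-programming problem to identify which of the finitely many incident edges improves the functional. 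Then I would switch to a connected walk (Algorithm~\ref{alg:walk}) that, starting from one known vertex and using the edge-adjacency read off from the tropical data, visits all vertices of $P$; exploiting the $B_4$-action of order $384$ we only need to explore one representative per orbit and track stabilizers, which reduces the $17\,214\,912$ vertices to $44\,938$ orbit representatives. Since the tropical hypersurface is only given as an unstructured union of cones (and, as the Example shows, the number of edges of $P$ can exceed the number of listed cones), the walk must also detect edges of $P$ that come from subdivisions or unions of the listed cones; this is handled by computing, at each visited vertex, the tangent cone of $P$ — equivalently the local structure of $\cT(\cM)$ around the corresponding ray — directly from the incidence of the $15$-dimensional cones meeting at that vertex.

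For the facets, the plan is to compute, for each orbit representative vertex $v$ of $P$, the tangent cone $T_v P$ (the cone of edge directions at $v$) and take its facet-defining inequalities; a vector $u$ is an inner normal to a facet of $P$ through $v$ precisely when $u$ spans a ray of $\cT(\cM)$ and the corresponding initial form has the expected codimension-one behavior. Each candidate facet normal $u$ is then certified by two independent checks: (i) verifying $u \in \cT(\cM)$ by checking that $\init_u(I(\cM))$ contains no monomial — but since we have no explicit $f$, we instead certify that $u$ lies in the Minkowski sum $\cT(X)+\cT(X)$ by exhibiting $u = u_1 + u_2$ with $u_i \in \cT(X)$; and (ii) confirming that $u$ is genuinely a \emph{ray} (not an interior point of a higher-dimensional cone) of $\cT(\cM)$ by computing the dimension of a certain linear space attached to $u$, namely the span of all maximal cones of $\cT(\cM)$ containing $u$ modulo the lineality space (Algorithm~\ref{alg:certifyFacets}): $u$ is a facet normal exactly when this span, together with $\RR u$ and the lineality space, has dimension $15$ and the local fan around $u$ is connected in the way a hyperplane arrangement complement would be. Running this over the $B_4$-orbits yields the $246$ orbit representatives of facet normals, hence $70\,646$ facets after expanding orbits.

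The hard part will be twofold. First, the sheer combinatorial size: naive traversal of $17\,214\,912$ vertices is infeasible, so the correctness of the whole approach hinges on the $B_4$-equivariance of Algorithms~\ref{alg:walk} and~\ref{alg:certifyFacets} — one must argue that exploring one representative per orbit and acting by $B_4$ genuinely recovers \emph{all} vertices and facets, with no orbit missed, which requires the walk graph on orbit representatives to be connected and the stabilizer bookkeeping to be exact. Second, and more subtle, is the gap flagged in the Example: because $\cT(\cM)$ is handed to us only as an unstructured set of $15$-dimensional cones (not as the Gr\"obner fan of $f$), an edge or a facet of $P$ may be dual to a face of $\cT(\cM)$ that is a proper subdivision of one listed cone, or the union of several; so the local reconstruction at each vertex must correctly refine/merge the incident cones. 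Certifying that the resulting inequality description is \emph{complete} — i.e.\ that we have found every vertex and every facet, and that the $f$-vector entries $17\,214\,912$ and $70\,646$ are exact rather than lower bounds — is where the real work lies, and is achieved by cross-checking the vertex enumeration against the facet enumeration (Euler-type relations and the requirement that every computed facet is supported by computed vertices and conversely) together with the ray-certification test of Algorithm~\ref{alg:certifyFacets}.
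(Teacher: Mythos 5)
Your overall route is the same as the paper's: use the duality between $\cT(\cM)$ (computed as $\cT(X)+\cT(X)$ with multiplicities) and the normal fan of $\NP(f)$, generate vertices by ray-shooting and walking, exploit the $B_4$-action, and certify facet directions by the ray test of Lemma~\ref{lm:rayTest}/Algorithm~\ref{alg:certifyFacets}. But there is a genuine gap at the step that actually proves the theorem, namely the certificate that the enumeration is \emph{complete}. You propose to close this by ``cross-checking the vertex enumeration against the facet enumeration (Euler-type relations and the requirement that every computed facet is supported by computed vertices and conversely).'' That is not a valid certificate: if $\cQ$ is the convex hull of the vertices found so far, then every facet of $\cQ$ is automatically supported by computed vertices, and Euler-type relations are satisfied by $\cQ$ whether or not $\cQ=\cP$, so these checks cannot detect missing vertices. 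The paper's completeness argument is different and is the crux of the proof: write $\cP=\bigcap_v \cT^{\cP}_v$ over its vertices, so an inequality is a facet of $\cP$ iff it is a facet of some tangent cone; then for each orbit representative $v$ of the known vertex set build the cone $C_v^{\cQ,\cP}$ spanned by those directions $w-v$ that are genuine edge directions of $\cP$ (read off from the $15\,788$ normal directions of the tropical cones), verify $C_v^{\cQ,\cP}\supseteq \cT_v^{\cQ}$ with \texttt{Polymake}, and certify each facet of $C_v^{\cQ,\cP}$ as a facet of $\cP$ by Algorithm~\ref{alg:certifyFacets} --- including the constant term, which is checked by ray-shooting a vertex on the face from a perturbed objective vector. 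Only then does Lemma~\ref{lm:certificateCompletePolytope} (if all facets of $\cQ$ are facets of $\cP$ then $\cQ=\cP$) yield the exact counts; and when a candidate facet fails certification, that failure is what produces a direction in which to ray-shoot for a missing vertex (Algorithm~\ref{alg:approximateP}). Without this loop your counts are only lower bounds, especially since (as you yourself note, and as the paper stresses) the walk can miss vertices and the listed cones do not carry the dual fan structure.

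Two smaller inaccuracies: the ray-shooting of Theorem~\ref{thm:RayShoot} computes the coordinates of $\cP^w$ directly as $\sum_v m_v|l^v_i|$ over intersections of the rays $w-\RR_{>0}e_i$ with $\cT(f)$; it is not an edge-improving simplex-style search needing a starting vertex. And in the facet test the relevant linear space is the span of the \emph{normal (edge) directions} $q_1,\dots,q_l$ of the cones containing $w$, which must have dimension $n-d-1=10$; it is not the span of the cones themselves modulo the lineality space, and the stated dimension $15$ does not match the paper's criterion.
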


Among the 44\,938 orbits of vertices, 215 have size 192 and 44\,723
has size 384.  The maximum coordinate of a vertex ranges between 14
and 20, and the minimum coordinate is either 0 or 1.  All but 46
orbits have a zero-coordinate.  A vertex can have up to seven
zero-coordinates.  Each vertex is contained in 11 to 62 facets. There
are 11\,800 symmetry classes of {\em simple} vertices, that is, those
contained in exactly 11 facets.  The following is a representative of
the the unique symmetry
class of vertices contained in 62 facets each, which has size 192:
$$(0,0,1,17,13,6,17,1,17,1,6,13,1,17,0,0).$$

We index the coordinates of $\mathbb{P}^{15}$ by $\{0,1\}^4$ and order
them lexicographically. Since our polynomial is (multi)-homogeneous,
knowing even a single point in the Newton polytope gives the
multidegree. We now describe the multidegree of the hypersurface $\cM$:

\begin{theorem}
\label{thm:main}
  The hypersurface $\cM$ has multidegree $(110, 55, 55, 55, 55)$ with
  respect to the grading defined by the matrix in \eqref{eqn:lineality}.
\end{theorem}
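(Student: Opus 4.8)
The plan is to read off the multidegree directly from the weighted tropical fan $\cT(\cM)$ constructed in Section~\ref{sec:tropicalLand}, using the principle stated in the introduction that a single vertex of the Newton polytope already determines it. By Corollary~\ref{cor:key}, together with the dimension count for the maximal cones of $\cT(\cM)$, the variety $\cM=X\centerdot X$ is an irreducible hypersurface, so $I(\cM)=(f)$ with $f$ irreducible. Moreover $\cM$ is invariant under the $5$-torus whose cocharacter lattice is the lineality lattice $\Lambda\cap\ZZ^{16}$ of $\cT(\cM)$, so $f$ is homogeneous for the $\ZZ^{5}$-grading $\deg(p_{ijkl})=(1,i,j,k,l)$ given by the columns of the matrix $\Lambda$ of \eqref{eqn:lineality}. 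Hence the multidegree is the common value $\Lambda a$ as $a$ ranges over the exponent vectors of the monomials of $f$; in particular it equals $\Lambda a$ for any single vertex $a$ of $\NP(f)$. The theorem thus reduces to exhibiting one vertex of $\NP(f)$ and multiplying it by the $5\times 16$ matrix $\Lambda$.

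To produce a vertex I would run a ray-shooting computation (the algorithm developed below). Fix a generic integral weight $w\in\ZZ^{16}$, meaning $w$ lies in the interior of a full-dimensional chamber of $\RR^{16}\smallsetminus\cT(\cM)$; for such $w$ the face of $\NP(f)$ maximizing $w$ is a single vertex $a_w$, and $\init_w(f)$ is a single monomial, with exponent vector $a_w$, up to scalar. The chamber of $w$ --- hence the region on which the support function $h_{\NP(f)}$ is linear and the affine form it agrees with there --- is located directly inside the explicit fan $\cT(\cM)=\cT(X)+\cT(X)$; what the fan does not determine is one value of $h_{\NP(f)}$, equivalently one genuine exponent vector of $f$. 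That value I would recover from the parametrization $\cM=X\centerdot X$: it amounts to the orders of vanishing of the coordinates $p_I$ along a generic one-parameter family of points of $\cM$ degenerating toward the $w$-corner, which are governed by $\cT(X)$ and the monomial map with matrix $(I_{16}\mid I_{16})$ via the index formula of Theorem~\ref{thm:STGeneralized}. Carrying this out once produces an honest vertex of $\NP(f)$; a convenient representative is $a=(0,0,1,17,13,6,17,1,17,1,6,13,1,17,0,0)$, and $\Lambda a=(110,55,55,55,55)$, which is the claim.

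Two consistency checks confirm the answer and explain its shape. First, $d_0=110$ is the ordinary degree of $\cM$ as a hypersurface in $\pr^{15}$, which one can cross-check combinatorially as the stable tropical intersection number of $\cT(\cM)$ with a generic tropical line in $\RR^{16}/\RR\mathbf 1$, using only the weighted maximal cones of Section~\ref{sec:tropicalLand}. Second, the $\Sn_4$-part of the $B_4$-symmetry of $f$ permutes the last four rows of $\Lambda$, forcing $d_1=d_2=d_3=d_4$; and for each of the four cube coordinates the corresponding involution in $(\Sn_2)^4$ exchanges the two blocks of eight variables cut out by its two values, so the two partial degrees of $f$ in these blocks agree and sum to $d_0$, giving $d_0=2d_r$ for $r\geq 1$, consistent with $110=2\cdot 55$.

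The main obstacle is the middle step. The weighted fan $\cT(\cM)$ determines $\NP(f)$ only up to translation, so pinning down even a single genuine exponent vector of $f$ is not a purely combinatorial matter: it requires feeding back the order-of-vanishing data coming from the parametrization, and organizing this so that the ray-shooting procedure terminates is where the actual work lies. Once any one vertex has been found, the rest is a single matrix multiplication.
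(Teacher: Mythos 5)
Your proposal follows essentially the same route as the paper: compute the weighted tropical hypersurface $\cT(\cM)=\cT(X)+\cT(X)$ with multiplicities via Theorem~\ref{thm:STGeneralized}, obtain a single vertex of $\NP(f)$ by ray-shooting (Theorem~\ref{thm:RayShoot}, Algorithm~\ref{alg:ray}), and read off the multidegree by applying the matrix in \eqref{eqn:lineality}, exactly as done in Section~\ref{sec:ComputeNP}. The only small inaccuracy is your ``main obstacle'': no extra order-of-vanishing data from the parametrization is needed, since $f$ is irreducible and $\cM$ lies in no coordinate hyperplane, so $f$ is divisible by no variable and Theorem~\ref{thm:RayShoot} already pins down the genuine Newton polytope (the translate touching all coordinate hyperplanes) purely from the weighted fan.
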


Now let us look at the 246 orbits of facets.
The following table lists the orbit sizes:

\smallskip
\begin{center}
  \begin{tabular}{cccccccccccc}
    \hline    \hline
    size & 2 & 8 &12 & 16 & 24 & 32 &48 & 64 & 96 & 192 & 384\\
    number of facet orbits & 1& 2 &1 &3 &1 &1 &7 &3 &15 &67&145\\
    \hline \hline
  \end{tabular}
\end{center}
\smallskip

The coordinates $x_{ijkl}$ are naturally indexed by bit strings $ijkl \in \{0,1\}^4$.  The two facet inequalities in the size-2 orbit 
say that the sum of $ x_{ijkl}$ such that $ i+j+k+l $ is even (or odd) is at least 32.
Each facet contains between 210 and 3\,907\,356 vertices.  The unique symmetry class of facets containing the most vertices consist of coordinate hyperplanes.

Using Algorithm~\ref{alg:certifyFacets}, we certified that out
of the 13 orbits of rays of the 9-dimensional tropical variety of the
Segre embedding $\pr^1\times\pr^1\times \pr^1\times \pr^1\hookrightarrow
\pr^{15}$, only the following eight are facet directions of
$\cT(\cM)$:

\smallskip
\small
\begin{verbatim}
(1, 0, 0, 1, 0, 1, 1, 2, 2, 1, 1, 0, 1, 0, 0, 1)
(1, 3, 3, 1, 3, 1, 1, 3, 1, 3, 3, 1, 3, 1, 1, 3)
(2, 1, 1, 0, 1, 0, 0, 0, 2, 1, 1, 0, 1, 0, 0, 0)
(2, 1, 1, 2, 1, 2, 2, 1, 1, 2, 2, 1, 2, 1, 1, 2)
(3, 2, 2, 1, 2, 1, 1, 0, 2, 1, 1, 0, 1, 0, 0, 0)
(3, 3, 3, 3, 3, 3, 3, 3, 1, 3, 3, 1, 3, 1, 1, 3)
(-1, 0, 0, 0, 0, 0, 0, 0, 0, 0, 0, 0, 0, 0, 0, 0)
(-1, -1, -1, -1, 0, 0, 0, 0, 0, 0, 0, 0, -1, -1, -1, -1).
\end{verbatim}
\normalsize
\smallskip

A complete list of vertices and facets, together with the scripts used
for computation, are available at \smallskip
\begin{center}
  \url{http://people.math.gatech.edu/~jyu67/ImpChallenge/}
\end{center}

\subsection{Computing vertices}

We now discuss how we obtained the Newton polytope.  We will first explain the connection between $\cT(f)$ and $\NP(f)$. From the
tropicalization $\cT(\cM)$ of the hypersurface $\cM = \{p : f(p) = 0
\} \subset \pr^{15}$ we want to compute the extreme monomials of $f$.
For a vector $w \in \RR^{16}$, the initial form $\init_w(f)$ is a
monomial if and only if $w$ is in the interior of a maximal cone
(chamber) of the normal fan of $\NP(f)$.  The tropical variety of the
hypersurface $\cM$ is the union of codimension one cones of the normal
fan of $\NP(f)$.  The multiplicity of a maximal cone in $\cT(\cM)$ is
the lattice length of the edge of $\NP(f)$ normal to that cone.

A construction for the vertices of the Newton polytope $\NP(f)$ from its normal fan
$\TropC (f)$ equipped with multiplicities was developed in
\cite{TropDiscr} (see also \cite{Binomials} for several numerical
examples).  The following is a special case of \cite[Theorem
2.2]{TropDiscr}. Since the operation $ \cT(f)$ interprets $f$ as a
Laurent polynomial, $\NP(f)$ will be determined from $\cT(f)$ up to
translation. The algorithm described in Theorem~\ref{thm:RayShoot}
computes a representative of $\NP(f)$ which lies in the positive
orthant and touches all coordinate hyperplanes, i.e.\ $f$ is a
polynomial not divisible by any non-constant monomial. We describe the
pseudocode in Algorithm~\ref{alg:ray}.

\begin{theorem} \label{thm:RayShoot}
  Suppose $w \in \RR^n$ is a generic vector so that the ray $(w -
  \RR_{>0}\, e_i)$ intersects $\cT(f)$ only at regular points of
  $\cT(f)$, for all $i$.  Let $\cP^w$ be the vertex of the polytope
  $\cP = \NP(f)$ that attains the maximum of $\{w \cdot x : x \in \cP\}$.
  Then the $i^\text{th}$ coordinate of $\cP^w$ equals
$$
\sum_{v} m_v \cdot |l^v_i|,
$$
where the sum is taken over all points $v \in \cT(f) \cap (w - \RR_{>
  0} e_i)$, $m_v$ is the multiplicity of $v$ in $\cT(f)$, and
$l^v_i$ is the $i^\text{th}$ coordinate of the primitive integral
normal vector $l^v$ to the maximal cone in $\cT(f)$ containing $v$.
\end{theorem}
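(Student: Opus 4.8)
The plan is to derive this as a direct specialization of \cite[Theorem 2.2]{TropDiscr}, but since we want the argument to be self-contained, I would give the following geometric reasoning. Recall that $\cT(f)$ is the codimension-one skeleton of the normal fan of $\cP = \NP(f)$, with the multiplicity $m_v$ of a maximal cone $C$ of $\cT(f)$ equal to the lattice length of the edge $e(C)$ of $\cP$ dual to $C$. Fix $i$ and consider the ray $R = w - \RR_{>0} e_i$ emanating from the generic point $w$ in the direction $-e_i$. Since $w$ is generic, it lies in the interior of a maximal chamber of the normal fan of $\cP$, so $\cP^w$ is a well-defined vertex, and the $i$-th coordinate $(\cP^w)_i$ is what we must compute. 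The key idea is to track how the optimal face of $\cP$ in direction $u$ changes as $u$ moves along the segment from $w$ to a point $w' = w - N e_i$ with $N \gg 0$; equivalently, to track the maximizer of the linear functional as we tilt from $w$ toward $-e_i$.

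The first step is to observe that, by genericity, the ray $R$ crosses $\cT(f)$ transversally at finitely many points $v_1, \dots, v_k$, each lying in the relative interior of a maximal cone $C_j$ of $\cT(f)$, ordered by decreasing first-return parameter along $R$. Dually, the vertex maximizing $\langle u, \cdot\rangle$ over $\cP$ changes exactly at these crossings: as $u$ passes through $C_j$, the maximizer jumps across the edge $e(C_j)$ of $\cP$ dual to $C_j$. Crucially, $e(C_j)$ is normal to $C_j$, hence is parallel to the primitive vector $l^{v_j}$, and its lattice length is $m_{v_j}$; therefore the maximizing vertex changes by $\pm m_{v_j}\, l^{v_j}$ at the $j$-th crossing. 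The second step is a bookkeeping argument: summing these jumps, the maximizing vertex at the far endpoint $w'$ differs from $\cP^w$ by $\sum_j \pm m_{v_j} l^{v_j}$. The third step pins down the signs and the endpoint. For $N$ large enough, $w' = w - Ne_i$ points essentially in the direction $-e_i$, so the maximizer of $\langle w', \cdot\rangle$ over $\cP$ is a vertex minimizing the $i$-th coordinate; since our chosen representative of $\cP$ touches every coordinate hyperplane, that minimal value is $0$. Each crossing of $\cT(f)$ along $R$ moves us monotonically toward smaller $i$-th coordinate — the edge $e(C_j)$ is traversed in the direction that decreases $\langle e_i, \cdot \rangle$, and the net decrease contributed by that edge is exactly $m_{v_j}\,|l^{v_j}_i|$ (the absolute value accounts for the orientation of the primitive normal). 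Hence
\[
0 = (\cP^{w'})_i = (\cP^w)_i - \sum_{j} m_{v_j}\, |l^{v_j}_i|,
\]
which is the claimed formula.

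I would then remark that the genericity hypothesis is used twice: first to guarantee $w$ lies in a chamber (so $\cP^w$ is a vertex, not a higher-dimensional face), and second to guarantee $R$ meets $\cT(f)$ only at regular points in the relative interiors of maximal cones, so that each crossing corresponds to a single well-defined edge of $\cP$ with a well-defined multiplicity and primitive normal; this also ensures $R$ avoids the lower-dimensional skeleton where several edges could be traversed simultaneously. The translation-ambiguity of $\cP$ (since $\cT(f)$ only sees $f$ as a Laurent polynomial) is resolved precisely by the normalization that $\cP$ lie in the positive orthant and touch all coordinate hyperplanes, which is what makes the endpoint value $0$ rather than an unknown constant.

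The main obstacle is the sign/orientation analysis in the third step: one must argue carefully that every crossing of $\cT(f)$ along the ray $R = w - \RR_{>0}e_i$ decreases the $i$-th coordinate of the running maximizer (rather than some crossings increasing it), so that the absolute values $|l^{v_j}_i|$ add up without cancellation. This is where the monotonicity of the linear functional $\langle e_i, \cdot\rangle$ along the sequence of edges is essential: as $u$ sweeps from $w$ to $w - Ne_i$, the maximizing vertex of $\cP$ traces a monotone lattice path in the $-e_i$ direction (a shortest-path/shelling-type property of the polytope's 1-skeleton relative to the objective $e_i$), so consecutive edges cannot "backtrack" in the $i$-th coordinate. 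Making this precise — and matching it cleanly to the hypotheses already present in \cite[Theorem 2.2]{TropDiscr} — is the technical heart of the proof; the rest is the standard normal-fan/Newton-polytope dictionary recalled just before the statement.
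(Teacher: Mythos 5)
Your proposal is correct, but be aware that the paper itself offers no proof of this statement: it is quoted verbatim as a special case of \cite[Theorem 2.2]{TropDiscr}, accompanied only by the remark that $\cT(f)$ determines $\NP(f)$ up to translation and that the chosen representative lies in the positive orthant and touches all coordinate hyperplanes. Your sweep-of-the-objective-vector argument is therefore a genuinely independent, self-contained route in the spirit of the cited source: you track the maximizing vertex as $u(t)=w-te_i$ moves along the ray, note that it changes only when $u(t)$ crosses a maximal cone of $\cT(f)$, that each change is a step along the dual edge of lattice length $m_v$ in the primitive direction $l^v$, and that the terminal vertex has $i$-th coordinate $0$ by the normalization. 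One simplification: the step you flag as the ``technical heart'' (that no crossing can increase the $i$-th coordinate) requires no shelling or shortest-path property of the $1$-skeleton; it is a one-line computation. If the maximizer changes from vertex $a$ to vertex $b$ at parameter $t_j$, then $t\mapsto\langle w-te_i,\,a-b\rangle$ is affine in $t$, positive for $t<t_j$ and negative for $t>t_j$, so its slope $-(a_i-b_i)$ is negative, i.e.\ $a_i-b_i>0$; hence every crossing strictly decreases the $i$-th coordinate, by exactly $m_{v_j}\lvert l^{v_j}_i\rvert$, and the absolute values add without cancellation. With that observation your outline closes into a complete proof; what it buys over the paper's citation is self-containedness and an explicit account of how the translation ambiguity is resolved by the positive-orthant normalization, at the cost of a page of argument the paper chose to outsource.
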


Note that we do not need a fan structure on $\cT(f)$ to use
Theorem~\ref{thm:RayShoot}.  A description of $\cT(f)$ as a set,
together with a way to compute the multiplicities at regular points,
gives us enough information to compute vertices of $\NP(f)$ in any
generic direction.

In Section~\ref{sec:tropicalLand} we computed $\cT(f)$ as a union of
6\,865\,824 cones.  For each of those cones, we calculated the lattice
index in Theorem~\ref{thm:STGeneralized} and the primitive vector
which is the direction of the edge of $\NP(f)$ normal to the cone.
There are 15\,788 distinct edge directions in $\NP(f)$.  We then pick
a random vector $w \in \RR^{16}$ and go through the list of
6\,865\,824 cones, recording the cones that meet any of the rays $w -
\RR_{>0}\, e_i$.  For each $i$, we sum the numbers $m_v \cdot
|l^{v}_i|$ over all the intersection points $v$ and obtain the
$i^\text{th}$ coordinate of the vertex.

\begin{algorithm}[htb]

  \KwIn{The list $\cF$ of maximal cones, with multiplicities,
    whose union is the codimension one cones in the normal fan of a polytope $\cP \subset \RR^n$.  An objective vector $w \in \RR^n$.}
  {\bf Assumption:}  The objective vector $w$ does not lie in any cone in $\cF$, i.e.\ the face $P^w$ is a vertex.  For each $i = 1, 2, \dots, n$ the ray $w - \RR_{>0} e_i$ does not meet the boundary of any cone in $\cF$.  \\
  \KwOut{The vertex $\cP^w$ that maximizes the scalar product with the objective vector $w$.}  
  \smallskip  
  $P^w \leftarrow 0$\\
  \For{each cone $\sigma$ in $\cF$}{
	\For{i = 1, 2, \dots, n}{
		\If{$\sigma \cap (w - \RR_{>0} e_i) \neq \emptyset$}
		{$\cP^w_i \leftarrow \cP^w_i + m_\sigma \cdot \ell_{\sigma,i}$, where $m_\sigma$ is the multiplicity of $\sigma$ and $\ell^\sigma$ is the primitive integral normal vector to $\sigma$ such that $\ell^{\sigma}_i > 0$.}
	}
  }  
  \Return $P^w$.
  \caption{Ray-Shooting: computing a vertex of a polytope from its normal fan.
    \label{alg:ray}}
\end{algorithm}

\begin{figure}[htb]
  \centering
  \includegraphics[scale=0.6]{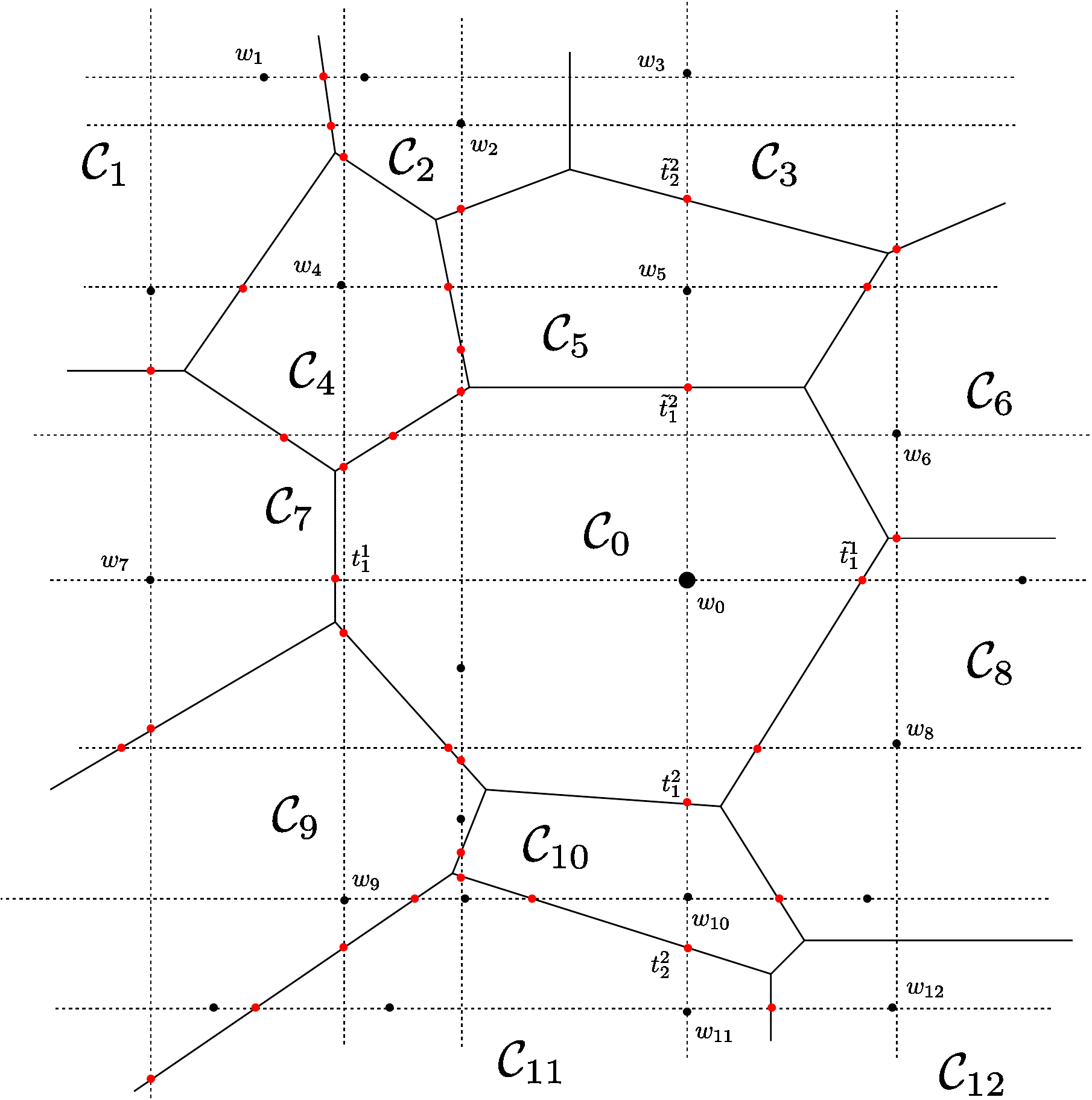}
  \caption{Ray-shooting and walking algorithms combined.  Starting from chamber $\mathcal{C}_0$ we shoot and walk from chamber to chamber.}
  \label{fig:walkAndShoot}
\end{figure}

To obtain the multidegree, we only need one vertex.  We computed the first vertex using \texttt{Macaulay 2} \cite{M2} in a few days.
Our ultimate goal was to compute the Newton polytope $\NP(f)$, a much more difficult computational problem that took us many more months to complete. 
As a first attempt, we bound the number of lattice
points in the polytope by the number of nonnegative lattice points of
the given multidegree. 
Using the software {\tt LattE}
\cite{LattE}, we found that the number of monomials in 16 variables
with multidegree $(110, 55, 55, 55, 55)$ is
\emph{5\,529\,528\,561\,944}. 

By construction, it is clear that the bottleneck of Algorithm~\ref{alg:ray} is in going through the list $\cF$ of 6\,865\,824 cones.
We can modify the algorithm to produce more than one vertex for each
pass through the list.  We do this in two ways.  One is to process
multiple objective vectors at once and save time by reducing the
number of file readings and reusing the linear algebra computations
for checking whether a cone meets a ray or not.  Another way to produce more
vertices is to keep track of the cones that we meet while
ray-shooting, and use them to walk from chamber to chamber in the
normal fan of $\NP(f)$.  This is described in Algorithm~\ref{alg:walk}. 
\begin{algorithm}
\KwIn{A generic objective vector $w \in \RR^n$, the vertex $\cP^w$, and the set $\cS := \{ (\sigma,i,t) \in \cF \times \{1,2,\dots,n\} \times \RR_{>0} ~:~  \sigma \cap (w - \RR_{>0} e_i) = \{w - t e_i\} \}$.  (This input is typically obtained from Algorithm \ref{alg:ray}.)}
  \KwOut{The set of all vertices of $\cP$ with objective vectors of the form $ w - t e_i $ for some $t \in \RR_{>0}$ and $i \in \{1,2, \dots, n\}$.}  
  \smallskip  
	\For{$i = 1, 2, \dots, n$}{
		Let  $\sigma_1, \dots, \sigma_m$ be the cones that intersect the ray $w - \RR_{>0} e_i$ transversely.\\
	Let $t_1, \dots, t_m \in \RR_{>0}$ be such that $(\sigma_k, i, t_k) \in \cS$ for $k = 1,2,\dots,m$. \\
		Order $\sigma_1, \dots, \sigma_m$ so that
 $$t_{1} = \cdots = t_{k_1} < t_{k_1+1} = \cdots = t_{k_2} < \cdots < t_{k_{l}+1} = \cdots = t_{m}:=t_{k_{l+1}} .$$
 	$v \leftarrow P^w$\\
 	\For{$j = 1,2, \dots, l+1$}{
		$\ell^{\sigma_{k_j}}\leftarrow $ primitive integral
                normal vector to $\sigma_{k_j}$ with $\ell^{\sigma_{k_j}}_i>0$;
\\
		$\displaystyle  v \leftarrow v - \left(\sum_{k_{j-1} < k \leq k_j} m_{\sigma_k}\right) \cdot \ell^{\sigma_{k_j}}$, where $k_0 = 1$, $k_{l+1} = m$, and $m_\sigma$ denotes the multiplicity of $\sigma$.\\
		{\bf Output} $v$, and an objective vector in the line segment between $w - t_{k_j} e_i$ and $w - t_{k_{j+1}} e_i$, where $t_{k_{l+2}} := \infty$.
	}
			}
  \caption{Walking: starting from an objective vector and corresponding vertex, compute the vertices obtained by changing the objective vector in negative coordinate directions.
    \label{alg:walk}}
\end{algorithm}
\begin{figure}[htb]
  \centering
  \includegraphics[scale=0.35]{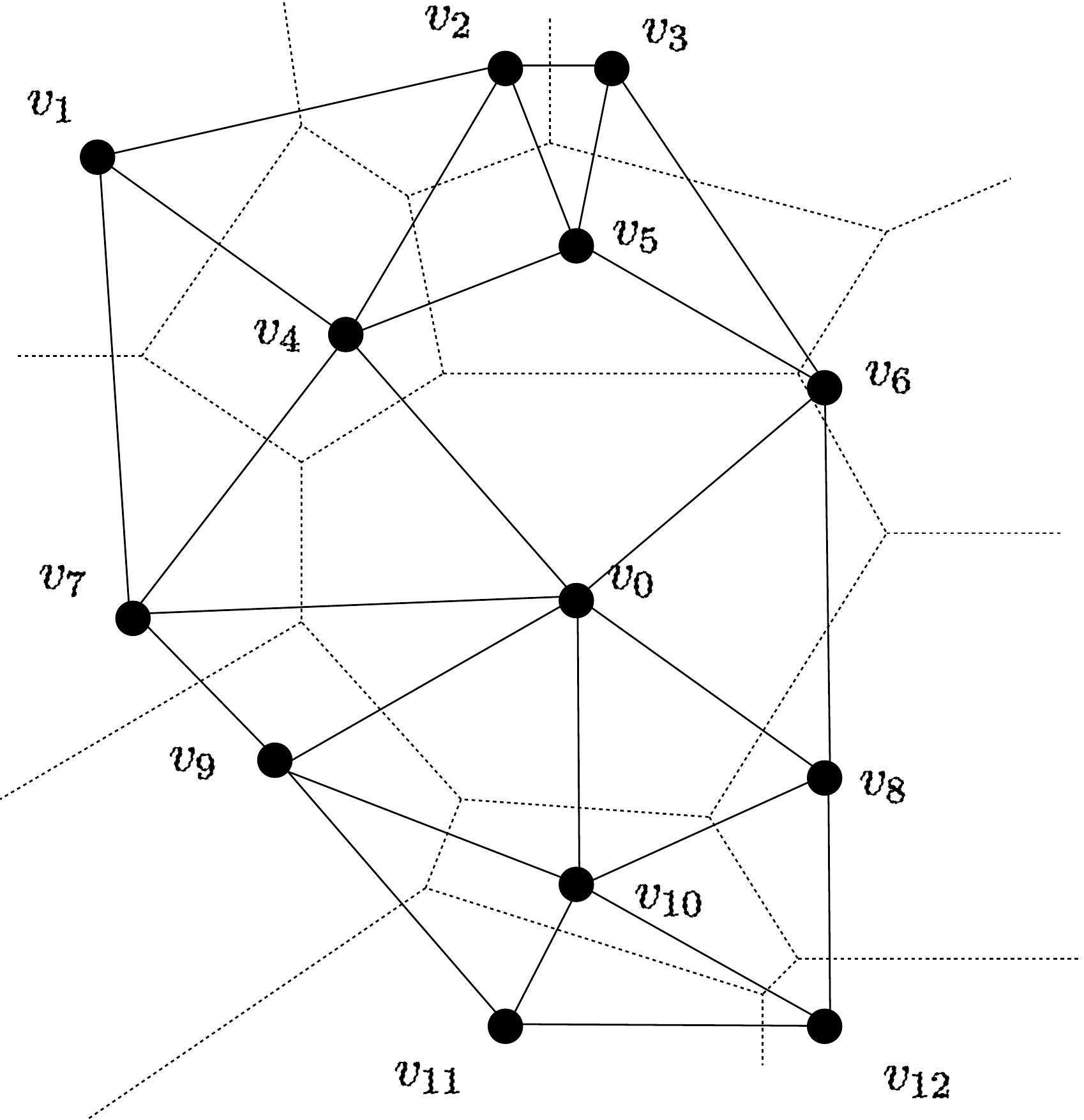}
  \caption{Walking from vertex to vertex in $\NP(f)\subset \RR^3$. In
    dash lines, we plot the tropical variety. The picture represents
    the local structure around $v_0$.}
  \label{fig:walkNP}
\end{figure}
On the polytope $\cP$, this means walking from vertex $P^{w-te_i}$ to
$P^{w-t' e_i}$ for scalars $t'>t > 0$ corresponding to points between
three consecutive intersection points, along an edge whose $i$-th
coordinate is negative. If the vector $w$ is generic, then we can
assume that $\sigma_j$ and $\sigma_k$ are parallel whenever they share
an intersection point obtained by shooting from $w$ in a fixed
coordinate direction. So we can use any of the cones in a parallel
class to compute the edge direction of the wall we walk across.  By
adding up the multiplicities of the cones in each class, we get the
lattice length of the edge of $\cP$.  This allows us to compute the
coordinates of the vertices dual to the chambers we walk into.  For
each of these vertices found by walking from a known vertex, we also
get an objective vector in the process.  For example, any vector of
the form $w - t e_i$, where $ t^i_{k_j} < t < t^i_{k_j + 1}$, is an
objective vector for the $j$-th vertex found in the walk in direction
$-e_i$. We take $t^i_{m+1}$ as $\infty$.  For numerical stability, we
use exact arithmetic over the rational numbers.  In particular, we
always choose the new objective vectors to be integral.

Using a new vertex, with its associated objective vector, we can repeat the ray-shooting (Algorithm \ref{alg:ray}) and
walking (Algorithm \ref{alg:walk}) again. The picture one should have in mind is that walking
from chamber to chamber in the tropical side corresponds to walks from
vertex to vertex in $\NP(f)$ along edges normal to the codimension one
cones traversed in the tropical hypersurface.

The combination of Algorithms 1 and 2 is illustrated in
Figures~\ref{fig:walkAndShoot} and~\ref{fig:walkNP}. Starting from
chamber $\mathcal{C}_0$ and an objective vector $w_0$, we shoot rays
in minus the coordinate axes directions. The intersection points are
indicated by their defining parameters $t^i_j$ (note that superscripts
are omitted in the notation of Algorithm\ref{alg:walk}). As we explain
below, to speed up the computation of Algorithm~\ref{alg:ray} we first
precompute the inverses of all suitable matrices of the form
$M_{\sigma}:=(-e_i| r_1 | \ldots |r_{15})$ where $\{r_1, \ldots,
r_{10}\}$ are generators of the cone $\sigma$ and $\{r_{11}, \ldots,
r_{15}\}$ span the lineality space in~\eqref{eqn:lineality}.  Using
this, the condition $(w_0-\lambda e_i)\cap \sigma \neq \emptyset$
translates to the first eleven coordinates of the solution $X$ of
$X^t=(M_{\sigma})^{-1}\cdot w_0$ being positive. Thus, we can easily
use the same systems to test $(w_0 + \lambda e_i) \cap \sigma \neq
\emptyset$, just changing the sign condition for the first coordinate
of $X$. This small modification allows us to walk in sixteen new
directions (the positive coordinate axes), and find new adjacent
vertices to vertex $v_0$ starting form objective vector $w_0$.  The
step updating $v$ in Algorithm~\ref{alg:walk} should be $ v \leftarrow
v \bm{+} \left(\sum_{k_{j-1} < k \leq k_j} m_{\sigma_k}\right) \cdot
\ell^{\sigma_{k_j}}$ instead of $ v \leftarrow v - (\ldots)$ .

In Figure~\ref{fig:walkAndShoot}, the parameters $\lambda$ associated
to the intersection points in these positive directions are denoted by
$\tilde{t}^i_j$. The dashed arrows indicate the shooting
directions. The points in the cones correspond to intersection points,
whereas the points inside chambers are the objective vectors obtained
for each vertice as described in Algorithm~\ref{alg:walk}.

The dual walk in the Newton polytope is depicted in
Figure~\ref{fig:walkNP}. We start walking from vertex $v_0$ and via
shooting we obtain the adjacent vertices $v_5, v_7, v_8$ and
$v_{10}$. Notice that by this procedure we miss vertices $v_4, v_6$
and $v_9$. However, we do get them if we start shooting from known
adjacent chambers to $\mathcal{C}_0$. For example, $v_6$ can be
computed if we shoot rays from chamber $\mathcal{C}_8$, followed by a
shoot from chamber $\mathcal{C}_6$. Observe that this depends heavily
on the choice of the objective vector $w_6$.

\subsection{Implementation}

A few notes about the implementation of our algorithms are in order.
As we started working on the problem, we used \texttt{Macaulay 2}
\cite{M2} to do the ray-shooting (Algorithm \ref{alg:ray}). This
script was fine for our first experiments, but it took three days to
generate a single vertex of the polytope. It soon became evident that
something faster was needed if we wanted to compute the entire
polytope.

Our first step was to translate the \texttt{Macaulay 2} script for
Algorithm~\ref{alg:ray} into Python~\cite{python}. We chose that
language because of its fast speed of development and availability of
arbitrary precision integers, which were needed by our program.  We
always scale our objects (matrices and vectors) by positive integers
so that our objects have integer coefficients. This step is crucial
for numerical stability.

This new implementation brought the running time to about 10
hours. This was a remarkable improvement, but as the number of
vertices of the polytope grew, we realized that something even faster
was required. Therefore, we decided to resort to caching: instead of
computing every inverse for each vector, we precomputed all the
inverses and stored them using a binary format suitable for fast
reading in Python (Pickles). This resulted in a file of a few tens of
gigabytes, but dropped the time required for an individual ray-shooting
procedure down to under three hours.

Once the Python prototype was working at a reasonable speed, we
translated it into C++~\cite{c++}, which brought the time required to do 
ray-shooting for a single vertex to 47 minutes on modest hardware. Moreover, ray-shooting for multiple objective vectors could be performed at the same time, thus amortizing the disk reads. Since
we still needed large integers, we decided to use GMP~\cite{gmp} and its C++ interface.

The procedure for walking is a more or less straightforward
translation of the pseudocode presented in
Algorithm~\ref{alg:walk}. It is still implemented in Python,
because it takes a short amount of time to walk from a few hundred
vertices at a time, and the simplicity of the script far outweights the
time gains a C++ translation would provide.

\subsection{Certifying facets}

We now discuss how to certify certain inequalities as facets of a polytope $\cP$ given by the dual tropical hypersurface $\cT(f)$.
By the duality between tropical hypersurfaces and Newton polytopes, each
facet direction must be a ray in the tropical variety, equipped with
the fan structure dual to $\cP $. Lemma~\ref{lm:rayTest} provides a
characterization for a vector in $\RR^n$ to be a ray of $\cT(f)$
with the inherited fan structure. 

\begin{lemma}\label{lm:rayTest}
  Let $w \in \RR^n$ and $\cT(f)$ be a tropical hypersurface given by a
  collection of cones, but with no prescribed fan structure. Let $d$
  be the dimension of its lineality space. Let
  $\mathcal{H}=\{\sigma_1, \ldots, \sigma_l\}$ be the list of cones
  containing $w$. Let $q_i$ be the normal vector to cone
  $\sigma_i$ for $i=1, \ldots, l$.
  Then, $w$ is a ray of $\cT(f)$ if and only if $\{q_1, \ldots, q_l\}$
  generates a $(n-d-1)$-dimensional vector space if and only if $w$ is a
  facet direction of $\NP(f)$.
\end{lemma}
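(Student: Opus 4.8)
The plan is to exploit the standard duality between a hypersurface's tropical variety (with its Gröbner/secondary fan structure) and the normal fan of its Newton polytope $\NP(f)$. Recall that $\cT(f)$ coincides, as a set, with the union of codimension-one cones of the normal fan $\mathcal{N}$ of $\NP(f)$, and that the full normal fan $\mathcal{N}$ is a complete fan in $\RR^n$ whose lineality space has dimension $d$ equal to the dimension of the affine span of $\NP(f)$ complement — more precisely, $\NP(f)$ lies in a $(n-d)$-dimensional affine subspace, so $\mathcal{N}$ has an $d$-dimensional lineality space and its cones have dimensions ranging from $d$ (the unique minimal cone) to $n$. Under this duality, $k$-dimensional cones of $\mathcal{N}$ correspond to $(n-k)$-dimensional faces of $\NP(f)$; in particular facets of $\NP(f)$ correspond to $(d+1)$-dimensional cones of $\mathcal{N}$, i.e.\ to rays of $\cT(f)$ in the fan structure inherited from $\mathcal{N}$. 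This already gives the second ``if and only if'': $w$ is a ray of $\cT(f)$ (with the inherited fan structure) precisely when $w$ spans a $(d+1)$-dimensional cone of $\mathcal{N}$ modulo lineality, equivalently when $w$ is a facet direction of $\NP(f)$.

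For the first equivalence, the key point is to identify the cone of $\mathcal{N}$ containing $w$ in its relative interior purely from the list $\mathcal{H} = \{\sigma_1, \dots, \sigma_l\}$ of maximal cones of $\cT(f)$ (given as a set) that contain $w$. First I would argue that a maximal cone $\sigma$ of $\cT(f)$ contains $w$ if and only if the corresponding edge $e_\sigma$ of $\NP(f)$ (which is normal to $\sigma$, with primitive edge direction $q_\sigma$ — here I use the letter $q_i$ for the normal to $\sigma_i$ as in the statement) lies on the face $F_w$ of $\NP(f)$ selected by $w$; this is just the translation of ``$w$ lies in the codim-one cone of $\mathcal{N}$ dual to $e_\sigma$'' under duality. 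Hence the edges of the face $F_w$ are exactly the $e_{\sigma_i}$ for $\sigma_i \in \mathcal{H}$, and therefore the linear span of $F_w$ (translated to the origin) is spanned by $\{q_1, \dots, q_l\}$ — provided $F_w$ has at least one edge, which holds as long as $F_w$ is not itself a vertex, i.e.\ $w$ is not in the interior of a maximal chamber; and this case is automatically excluded since $w \in \cT(f)$ forces $\dim F_w \ge 1$. Consequently $\dim(\text{span}\{q_1,\dots,q_l\}) = \dim F_w$. Now the cone of $\mathcal{N}$ containing $w$ in its relative interior is the normal cone $N_{F_w}$, which has dimension $n - \dim F_w$; so $w$ is a ray of $\cT(f)$ (i.e.\ $\dim N_{F_w}/\text{lineality} = 1$, i.e.\ $\dim N_{F_w} = d+1$) exactly when $\dim F_w = n - d - 1$, i.e.\ when $\{q_1, \dots, q_l\}$ spans an $(n-d-1)$-dimensional space. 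Chaining these identifications yields all three equivalences.

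The step I expect to be the main obstacle is making rigorous the claim that the edges of the face $F_w$ are \emph{exactly} the cones $\sigma_i \in \mathcal{H}$, together with the identification of their primitive edge directions with the normal vectors $q_i$. This requires knowing (a) that every codimension-one cone of $\mathcal{N}$ appearing in $\cT(f)$ is genuinely dual to an edge of $\NP(f)$ with the stated primitive direction — which is exactly the content of the fan/polytope duality underlying Theorem~\ref{thm:RayShoot} and the results of \cite{TropDiscr} — and (b) that the set-theoretic description of $\cT(f)$ we have (a union of cones, possibly not matching the Gröbner fan structure, possibly subdivided or merged as warned after Corollary~\ref{cor:key}) still correctly records, for each $w$, the full collection of maximal-dimensional cones through $w$; this is fine because containment of a point in a cone is intrinsic and independent of how the cones are packaged. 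One technical subtlety to handle carefully: if two of the $\sigma_i$ are \emph{parallel} (share the same normal direction $q_i$), they correspond to the same edge direction of $\NP(f)$ but possibly to collinear edges on $F_w$ — this does not affect the span computation, since we only care about the dimension of $\text{span}\{q_1, \dots, q_l\}$, but it is worth a remark. Once these duality facts are in hand, the dimension bookkeeping $\dim N_{F_w} = n - \dim F_w$ and $\dim F_w = \dim\text{span}\{q_i\}$ is routine, and the lemma follows.
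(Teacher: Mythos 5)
Your proposal is correct and follows essentially the same route as the paper's proof: identify the $q_i$ as precisely the edge directions of the face $\cP^w$ of $\NP(f)$, note that $\NP(f)$ has dimension $n-d$ because the lineality space of $\cT(f)$ has dimension $d$, and conclude that $\cP^w$ is a facet exactly when $\{q_1,\ldots,q_l\}$ spans an $(n-d-1)$-dimensional space. The paper's argument is just a compressed version of your duality and dimension bookkeeping.
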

\begin{proof}
The vectors $\{q_1, \dots, q_l\}$ are precisely the directions of edges in the face $\cP^w$ of $\cP := \NP(f)$.  Since the lineality space of $\cT(f)$ has dimension $d$, the polytope $\cP$ has dimension $n-d$.  The face $\cP^w$ is a facet of $\cP$ if and only if $q_1, \dots, q_l$ span a $(n-d-1)$-dimensional vector space.
\end{proof}

For any objective vector $w \in \RR^n$, we can compute a vertex in the
face $\cP^w$ by applying ray-shooting (Algorithm \ref{alg:ray}) to a
generic objective vector $w'$ in a chamber of the normal fan of $\cP$
containing $w$.  If we know that $w$ is in fact a facet direction of
$\cP$, then any vertex in $\cP^w$ gives us the constant term $a$ in
the facet inequality $w \cdot x \leq a$.  This is used in
Algorithm~\ref{alg:certifyFacets} for checking if a given inequality
is a facet inequality of $\cP$. This step will be essential to certify
that our partial list of vertices is indeed the complete list of
vertices of the polytope $\cP$. We discuss this approach in Section~\ref{sec:completing-polytope}.

\begin{algorithm}
\KwIn{An inequality $w \cdot x \leq a$ and a tropical hypersurface (dual to polytope $\cP$) given as a collection $\cF$ of maximal cones.
}
  \KwOut{\emph{True} if the inequality is a valid facet inequality of $\cP$;
    \emph{False} otherwise.} 
  \smallskip 
$\mathcal{N}\leftarrow \{\}$ \\
	\For{$\sigma \in \mathcal{F}$}{
		\If{$w \in \sigma$}{ 
$\mathcal{N}\leftarrow \mathcal{N}\cup \{$normal vector to $\sigma\}$;}}
\If{$ \dim \langle \mathcal{N}\rangle < n-d-1$}{{\bf Output} \emph{False}}
\Else{
$w' \leftarrow $ a vector in the interior of a chamber containing $w$\\
Compute the vertex $\cP^{w'}$ using ray-shooting (Algorithm \ref{alg:ray}).\\
\If{$w \cdot \cP^{w'}= a$} {{\bf Output} \emph{True}}
\Else{{\bf Output} \emph{False}}
} 
        \caption{Facet certificate: Check if a given inequality defines a facet of a polytope given by its normal fan.\label{alg:certifyFacets}}
\end{algorithm}

We now explain how to obtain a vector in the interior of a chamber
containing a facet direction $w$. We start by applying a modified version of
Algorithm~\ref{alg:ray} with input vector $w$ and when we choose to
shoot rays only in direction $-e_1$.  Since $w$ is a ray of the tropical
variety given by the collection $\mathcal{F}$, it belongs to some
cones $\{\tau_1, \ldots, \tau_s\}$ in $\mathcal{F}$. Let $\sigma_1,
\ldots, \sigma_m$ be the cones we intersect along the $-e_1$ direction
(we allow intersections at
boundary points of each cone). Note that we only pick those cones
with $l_{1}^{\sigma_j}\neq 0$. 

Now, we use Algorithm~\ref{alg:walk} with input vector $w$ and the set
$\mathcal{S}$ corresponding to the cones $\sigma_1, \ldots, \sigma_m$
and coordinate $1$. We assume $(\sigma_k, 1, t_k)$ are ordered in
increasing order, with all $t_k\geq 0$. We have two possible
scenarios: either $\mathcal{S}$ is a subset of $\{0\}$ (that is,
either the empty set or the set $\{0\}$) or it contains a positive
real number. In the first case, we pick an objective vector $w_1=w- t
e_1$ for a positive number $t$ (for numerical stability, we choose $t$
to be a big rational number). In the second case, pick a number $t$
between zero and the first positive number $t_j$ from $\mathcal{S}$
and let $w_1=w-te_1$.

Third, we check if any cone in $\mathcal{F}$ contains $w_1$ or not. If
not, then we let $w'=w_1$. If yes, by the balancing condition, this
means that there exists a maximal cone in the tropical variety
containing both $w_1$ and $w$. Note that this cone may be obtained by
gluing and/or subdividing some cones in $\mathcal{F}$. In this case
then we proceed as above, replacing the original input vector $w$ by
$w_1$ and shooting rays using coordinate $2$ instead of coordinate $1$. We
repeat this process with all coordinates if necessary. Unless we have
$w_i$ not contained in any cone of $\mathcal{F}$, at step $i$ we are
guaranted to have a cone containing $w, w_1, \ldots, w_{i-1}, w_i$ by
construction.  By dimensionality argument, at most in sixteen steps,
we obtain a vector $w_i$ not contained in any cone of
$\mathcal{F}$. This vector will be the objective vector $w'$ from
Algorithm~\ref{alg:certifyFacets}.

\subsection{Completing the polytope}
\label{sec:completing-polytope}
Once the ratio of new vertices computed with ray-shooting and walking
decreases, the next natural question that arises is how to guarantee
that we have found all vertices of our polytope. To answer this
question, we construct the tangent cones at each vertex and try to certify their facets as facets of $\cP$.

\begin{definition}
  Let $\cP $ be a full-dimensional polytope in $\RR^N$ and $v$ a vertex
  of $\cP $. We define the \emph{tangent cone of $\cP $ at $v$} to be
  the set:
\[
\cT^{\cP }_v:= v + \RR_{\geq 0}\langle w-v: w\in \cP \rangle = 
v + \RR_{\geq
  0} \langle e: e \text{ edge of } \cP  \text{ adjacent to }v\rangle. 
\]
\end{definition}

By construction, $\cT^{\cP }_v$ is a polyhedron with only one vertex
and $\cP\!\! =\!\! \bigcap_{v \text{ vertex of }\cP} \cT^{\cP }_v\!\!$.  In
particular, an inequality defines a facet of $\cP$ if and only if it
defines a facet of one of the tangent cones.

Let $\mathcal{Q}$ be the convex hull of the vertices of $\mathcal{P}$
obtained via Algorithms~\ref{alg:ray} and~\ref{alg:walk}. Our goal is
to certify that $\mathcal{Q} = \mathcal{P}$.  We proceed as
follows. For each vertex $v$ of $\cQ$ we wish to compare the tangent
cones $\cT_v^\cQ$ and $\cT_v^\cP $.  Since $\cQ$ has over seventeen
million vertices and $\cT^\cQ_v$ has no symmetry, straightforward
convex hull computations are infeasible.  If $\cT^\cQ_v = \cT^\cP _v$
then the extreme rays of $\cT^\cQ_v$ would be edge directions of $\cP
$, which we have already computed as the normal directions to the
maximal cones of the tropical hypersurface, and which are 
15\,788 in total. For a fixed vertex $v \in \cQ$ we
compute all differences $w-v$ for all vertices $w$ of $\cQ$ and test
which of these vectors are parallel to edges of $\cP $. The number of
such edge directions in $\cT^\cQ_v$ is expected to be very small
(usually under 30 in practice).  Let $C_v^{\cQ,\cP}$ be the convex
hull of $v$ and all rays along the edge directions of $\cP$ in
$\cT^{\cQ}_v$.  So we have $C_v^{\cQ,\cP} \subseteq \cT_v^{\cQ}$ and
we can test if $C_v^{\cQ,\cP} \supseteq \cT_v^{\cQ}$ by computing
facets of $C_v^{\cQ,\cP}$ with \texttt{Polymake}~\cite{polymake}.  If
$C_v^{\cQ,\cP} \supseteq \cT_v^{\cQ}$, we use Algorithm
\ref{alg:certifyFacets} to check whether each facet of $C_v^{\cQ,\cP}$
is also a facet of $\cP $.  In this way, we can certify that
$\cT_v^{\cP } \subseteq C_v^{\cQ,\cP}$, hence $C_v^{\cQ,\cP} =
\cT_v^{\cQ} = \cT_v^{\cP }$.  Certifying this for a vertex $v$ of
$\cQ$ in each symmetry class will give us $ \cQ = \bigcap_{v \textrm{
    vertex of }Q} \cT^\cQ_v \supseteq \bigcap_{v \textrm{ vertex of
  }P} \cT^\cP _v = \cP $, hence $\cQ = \cP $.  We conclude:
\begin{lemma}\label{lm:certificateCompletePolytope}
  Let $\cP $ be a polytope and $\cQ\subset \cP $ be the convex hull of
  a subset of the vertices in $\cP $. If all facets of $\cQ$ are
  facets of $\cP $, then $\cQ \supset P$, so $\cQ=\cP $.
\end{lemma}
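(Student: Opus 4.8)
The plan is to prove Lemma~\ref{lm:certificateCompletePolytope} directly from the fact that every polytope is the intersection of its facet-defining halfspaces, together with the inclusion $\cQ \subseteq \cP$.

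First I would recall the standard representation: since $\cP$ is a polytope (hence bounded) of dimension $n-d$, we can write $\cP = \bigcap_{j} H_j^{-}$ where $H_1^{-}, \ldots, H_N^{-}$ are the closed halfspaces bounded by the facet hyperplanes of $\cP$; no smaller sub-collection of these halfspaces suffices, and together with the affine hull they cut out $\cP$ exactly. The same statement applies to $\cQ$: writing $\cQ = \bigcap_i G_i^{-}$ as the intersection of the halfspaces bounded by its own facets. (One should note $\cQ$ and $\cP$ have the same affine hull, since $\cQ$ contains enough vertices of $\cP$ to be full-dimensional in that hull; in the application $\cQ$ already contains vertices from which the known edge directions span the lineality-complement, but in the abstract lemma we may simply assume $\dim \cQ = \dim \cP$, or absorb the affine hull into each intersection.)

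The key step is the hypothesis: every facet of $\cQ$ is a facet of $\cP$. Concretely, each halfspace $G_i^{-}$ appearing in the irredundant description of $\cQ$ coincides with one of the $H_j^{-}$ in the description of $\cP$. Therefore $\{G_i^{-}\} \subseteq \{H_j^{-}\}$ as sets of halfspaces, and intersecting a sub-collection can only enlarge the region:
\[
\cQ \;=\; \bigcap_i G_i^{-} \;\supseteq\; \bigcap_j H_j^{-} \;=\; \cP .
\]
Combined with the standing inclusion $\cQ \subseteq \cP$ (as $\cQ$ is the convex hull of a subset of the vertices of $\cP$, all of which lie in $\cP$, and $\cP$ is convex), we conclude $\cQ = \cP$. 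This is the chain of inclusions asserted in the paragraph preceding the lemma.

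The only subtle point—the ``main obstacle,'' though it is minor—is making sure that ``facet of $\cQ$'' is interpreted as a genuine facet hyperplane and that the facet hyperplanes of $\cQ$, all being facet hyperplanes of $\cP$, actually suffice to cut out $\cP$: a priori $\cP$ could have \emph{more} facets than $\cQ$, and then $\bigcap_i G_i^{-}$ would be strictly larger than $\cP$. But that is precisely why we need the reverse inclusion $\cQ \subseteq \cP$ as well; the two inclusions together force equality, which in turn forces $\cQ$ and $\cP$ to have exactly the same facets. So no case analysis is required: the argument is the two-line inclusion chain above, and the work is entirely in setting up the irredundant halfspace descriptions correctly.
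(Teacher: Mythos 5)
Your proof is correct and is essentially the argument the paper intends: since every facet-defining halfspace of $\cQ$ is a facet-defining halfspace of $\cP$, their intersection (which is $\cQ$, as $\cQ$ is bounded and full-dimensional in the relevant affine hull) contains $\cP$, and together with $\cQ \subseteq \cP$ this forces equality. The paper states the lemma without a separate proof, treating exactly this two-line halfspace/inclusion argument as immediate, so your write-up (including the remark on matching dimensions) matches the paper's reasoning.
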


If we find that a facet $w\cdot x \leq a$ of $C_v^{\cQ,\cP}$ is not a facet
of $\cP $ from Algorithm \ref{alg:certifyFacets}, then we are missing
vertices adjacent to $v$ in $\cP $ in this ``false facet direction''
$w$, so we can perturb $w$ so that it lies in a chamber of the normal
fan of $\cP $ and use ray-shooting (Algorithm \ref{alg:ray}) to find a
new vertex in that direction.  Using this method, we obtained the entire
polytope in finite number steps. We describe the process of
approximating $\cP $ by a subpolytope $\cQ$ in
Algorithm~\ref{alg:approximateP}.
A schematic of complete tangent cones and incomplete tangent cones is
depicted in Figure~\ref{fig:approximatePolytope}. 


\begin{algorithm}
  \KwIn{A partial list $V$ of vertices of $\cP $, a collection of cones
    $\mathcal{F}$ whose union is the tropical hypersurface, 
    $d=$dimension of lineality space of the tropical hypersurface, and
    the group of symmetries of the tropical hypersurface.  }
  \KwOut{A complete list of vertices and facets of $\cP $.}
$\mathcal{S} \leftarrow \{\}$;\\
  \For{representatives $v$ of orbits of $V$}{
  	 $C^{\cQ, \cP}_v \leftarrow $ convex hull of $v$ and all rays in directions $w - v$ where $w \in V$ and $w - v$ is normal to a cone in $\cF$.\\
	   $A \leftarrow $ facets of $C^{\cQ, \cP}_v $ (using {\tt Polymake}).\\
    \For{$z \in A$}{
\If { $z$ is a not facet of $\cP $ by Algorithm~\ref{alg:certifyFacets}} {
$w' \leftarrow $ a vector in the interior of a chamber whose closure contains $z$.\\
Compute the vertex $\cP^{w'}$ using ray-shooting (Algorithm \ref{alg:ray}).\\
$V \leftarrow V\, \cup $ orbit of $\cP^{w'}$\\
{\bf Break} and restart the outermost for-loop with the new $V$.
}
\Else{$\mathcal{S}\leftarrow \mathcal{S} \cup  \{z\}$}}

  {\bf Output} vertices $V$ and facets $\mathcal{S}$.}

        \caption{Approximation of $\cP $ by a subpolytope $\cQ$: Given a
          partial list of vertices of a polytope $\cP $ with no known
          complete list of vertices, we construct the subpolytope
          $\cQ$ generated by this list. We certify
          when $\cQ$ equals $\cP $.\label{alg:approximateP}}
\end{algorithm}
\begin{figure}[htb]
  \centering
  \includegraphics[scale=0.4]{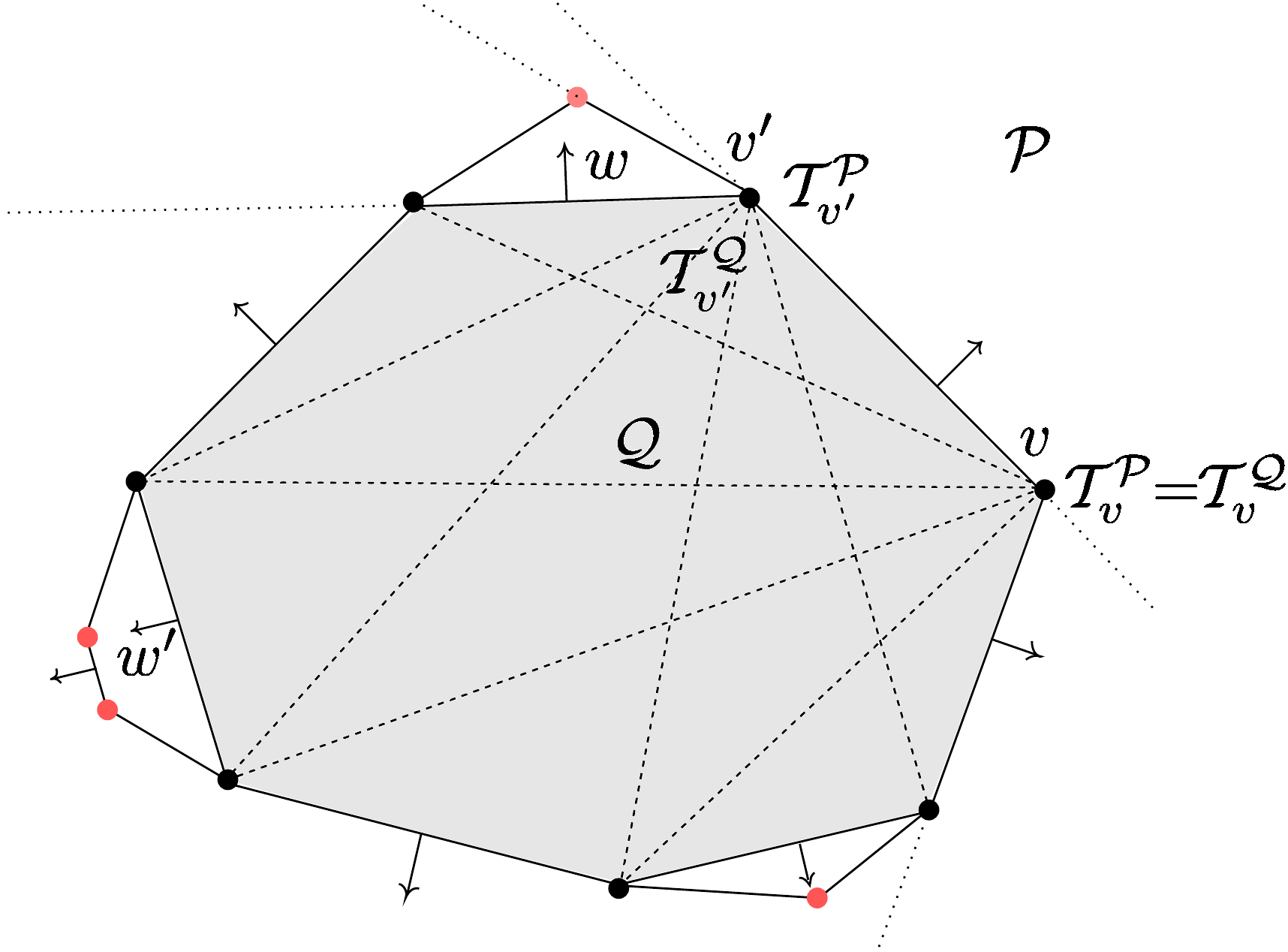}
 \caption{Approximation algorithm. We compute the tangent cones at
    vertices of $\mathcal{P}$ and we build a polytope
    $\mathcal{Q}\subset \mathcal{P}$. We certify if facets of
    $\mathcal{Q}$ are also facets of $\mathcal{P}$ by
    Algorithm~\ref{alg:certifyFacets}. In the picture, we certify all
    facet directions in $\cT^{\mathcal{Q}}_v$ containing vertex $v$
    but we cannot certify the facet direction $w$ of the tangent
    cone $\cT^{\mathcal{Q}}_{v'}$. In addition, although we can
    certify the facet direction $w'$ of $\mathcal{Q}$ as a true facet
    direction of $\mathcal{P}$, we will not be able to certify
    the constant corresponding to this facet direction of $\mathcal{P}$ since we are missing all
  its supporting vertices. The true constant will be obtained using
  Algorithm~\ref{alg:certifyFacets}.} 
  \label{fig:approximatePolytope}
\end{figure}

In the final stages of the computation, if we find that
$C_v^{\cQ,\cP}$ is a strict subcone of the tangent cone $ \cT^\cQ_v$,
we enumerated the rays $w-v$ (with $w \in V$) that lie in the
difference $ \cT^\cQ_v \backslash C_v^{\cQ,\cP}$. If the number of
such rays is small (no more than a few hundreds), we replace
$C_v^{\cQ,\cP}$ with the convex hull of $C_v^{\cQ,\cP}$ and those rays
(computed using {\polymake}) and proceed as in
Algorithm~\ref{alg:approximateP}.  By executing
Algorithm~\ref{alg:approximateP} in this way, we were able to compute
and certify all vertices and facets of the polytope.


\section*{Acknowledgment}


We wish to acknowledge Bernd Sturmfels for suggesting this problem. We
thank Dustin Cartwright, Daniel Erman and Anders Jensen for
inspiring discussions and our two anonymous
referees for helping us improve the exposition.  We also thank the \emph{Mathematical Sciences Research Institute} (MSRI) for providing a wonderful working environment for this project, and the computing staff at MSRI and Georgia Tech School of Math for their fantastic support.  Finally we acknowledge
 the computers at MSRI, Georgia Tech, and the University of Buenos Aires for their hard work.


\bibliographystyle{plain}

\end{document}